\newcommand{\bZ}{{\mathbb Z}}
\newcommand{\s}{\mathop{\mathrm{S}}\nolimits}
\newtheorem{thm}{Theorem}[section]
\newtheorem{prop}[thm]{Proposition}
\newtheorem{lemma}[thm]{Lemma}
\newtheorem{cor}[thm]{Corollary}
\newtheorem{thm'}{Theorem}[subsection]
\newtheorem{prop'}[thm']{Proposition}
\newtheorem{lemma'}[thm']{Lemma}
\newtheorem{cor'}[thm']{Corollary}
\newtheorem{exam'}[thm']{Example}
\numberwithin{equation}{section}
\author{Hideaki \=Oshima}
\address[H. \=Oshima]{Professor Emeritus, Ibaraki University, Mito, Ibaraki 310-8512, Japan}
\email{hideaki.ooshima.mito@vc.ibaraki.ac.jp}
\author{Katsumi \=Oshima}
\address[K. \=Oshima]{Ishikawa, Mito, Ibaraki 310-0905, Japan}
\email{k-oshima@mbr.nifty.com}
\subjclass[2020]{Primary 55P99; Secondary 55Q05}
\keywords{Generalized Hopf invariant, $4$-fold Toda brackets}
\begin{document}

\title{The generalized Hopf invariant and four-fold Toda brackets}
\maketitle
\begin{abstract}
We give two formulas for the generalized Hopf invariant and $4$-fold Toda brackets which are useful in computations of homotopy groups of spheres. 
\end{abstract}
\section{Introduction}
In \cite[Corollary 1.2]{OO4}, we have given a formula for 
the generalized Hopf invariant (alias James-Hopf invariant) $H$ 
and $n$-fold Toda brackets $\{\alpha_n,\dots,\alpha_1\}^{(\ddot{s}_t)}_{(m_n,\dots,m_1)}$ for all $n\ge 4$. 
In this paper we restrict our attention to some other $4$-fold Toda brackets and 
we modify and improve partially works of \^{O}guchi \cite{Og} and Mimura \cite{M1,M2}. 

We will work in $\mathrm{TOP}^*$, the category of pointed spaces and pointed maps, 
and we will use freely notations in \cite{OO1} except the order in numbering of spaces and maps. 
In particular $E^n$ denotes the $n$-fold reduced suspension, that is $E^nX=X\wedge\s^n$. 
We will also use symbols of elements of homotopy groups of spheres in \cite{T}. 

Our main results are 1.1--1.4 below. 

\begin{thm}
If a quadruple of homotopy classes $\alpha_k\in[E^{m_k}X_k,X_{k+1}]\ (1\le k\le 4)$ satisfies
\begin{gather}
E(\alpha_4\circ E^{m_4}\alpha_3)=0,\ \alpha_3\circ E^{m_3}\alpha_2=0,\ \alpha_2\circ E^{m_2}\alpha_1=0,\\
\{E\alpha_4,\alpha_3,E^{m_3}\alpha_2\}_{1+m_4}=\{0\},\quad\{\alpha_3,\alpha_2,E^{m_2}\alpha_1\}_{m_3}\ni 0,\\
\text{$X_k=\s^{n_k}$ for $k=3,5$ with $n_3\ge 1$ and $n_5\ge 1$},
\end{gather} 
then the $4$-fold Toda bracket $\{E\alpha_4,\alpha_3,\alpha_2,\alpha_1\}^{(2)}_{(1+m_4,m_3,m_2,m_1)}$ (see \S2.1 for definition) is not empty and 
\begin{align*}
H\{E\alpha_4,\alpha_3,\alpha_2,&\alpha_1\}_{(1+m_4,m_3,m_2,m_1)}^{(2)}\\
&\subset \bigcup_{\alpha\in\widetilde{\Delta}_{n_5}^{-1}([\alpha_4\circ E^{m_4}\alpha_3])}(-1)^{m_4} \{\alpha,EE^{m_3}\alpha_2,EE^{m_3}E^{m_2}\alpha_1\}_{1+m_4}.
\end{align*}
Here $\widetilde{\Delta}_{n_5}$ is a homomorphism from a subgroup of $\pi_{n_3+m_3+m_4+2}(\s^{2n_5+1})$ to a quotient group of $\pi_{n_3+m_3+m_4}(\s^{n_5})$ (see \S2.4) which relates with $\Delta$ of the $EH\Delta$-sequence of Toda, and $[\alpha_4\circ E^{m_4}\alpha_3]$ is the element 
represented by $\alpha_4\circ E^{m_4}\alpha_3$. 
\end{thm}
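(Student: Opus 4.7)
The plan is to produce a concrete representative of the $4$-fold Toda bracket $\{E\alpha_4,\alpha_3,\alpha_2,\alpha_1\}^{(2)}_{(1+m_4,m_3,m_2,m_1)}$ following the definition in \S2.1, apply the generalized Hopf invariant $H$ to it, and identify the outcome as an element of a $3$-fold Toda bracket whose first entry is a preimage under $\widetilde{\Delta}_{n_5}$ of $[\alpha_4\circ E^{m_4}\alpha_3]$.

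To build a representative I would exploit the hypotheses symmetrically. On the inner side, the vanishings $\alpha_2\circ E^{m_2}\alpha_1=0$ and $\alpha_3\circ E^{m_3}\alpha_2=0$ together with the condition $\{\alpha_3,\alpha_2,E^{m_2}\alpha_1\}_{m_3}\ni 0$ supply the two successive null-homotopies needed to extend $\alpha_3$ over the iterated mapping cone of $E^{m_3}\alpha_2$ and $E^{m_3+m_2}\alpha_1$. Symmetrically, on the outer side, the hypotheses $E(\alpha_4\circ E^{m_4}\alpha_3)=0$ and $\{E\alpha_4,\alpha_3,E^{m_3}\alpha_2\}_{1+m_4}=\{0\}$ provide the null-homotopies required for $E\alpha_4$ to pass through the mapping cones of $\alpha_3$ and $E^{m_3}\alpha_2$ after the single suspension that sits in the first slot. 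Splicing these extensions with the standard collapsing maps yields a representative of the $4$-fold bracket landing in $EX_5=\s^{n_5+1}$.

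Next I would compute $H$ of this representative. Because the target is a single-suspension sphere, the James model identifies $H$ with the second James--Hopf map, and the $EH\Delta$-sequence allows us to rewrite things: any null-homotopy of $E(\alpha_4\circ E^{m_4}\alpha_3)$, viewed as a map into $\s^{n_5+1}$, lifts modulo the image of $\Delta$ to a class $\alpha\in\pi_{n_3+m_3+m_4+2}(\s^{2n_5+1})$ characterized by $\widetilde{\Delta}_{n_5}(\alpha)=[\alpha_4\circ E^{m_4}\alpha_3]$. Re-assembling this lift with the suspensions of the inner extensions recasts $H$ of the representative as an element of the $3$-fold Toda bracket $\{\alpha,EE^{m_3}\alpha_2,EE^{m_3}E^{m_2}\alpha_1\}_{1+m_4}$. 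The sign $(-1)^{m_4}$ enters when one commutes the distinguished suspension coordinate past the $m_4$-fold suspension of $\alpha_3$ that lies inside the representative.

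Taking the union over all admissible lifts $\alpha\in\widetilde{\Delta}_{n_5}^{-1}([\alpha_4\circ E^{m_4}\alpha_3])$ absorbs the indeterminacy in the choice of null-homotopy of $E(\alpha_4\circ E^{m_4}\alpha_3)$, yielding the claimed inclusion. The main obstacle I foresee is the coordinated control of indeterminacies: one must verify that varying the defining data of the $4$-fold bracket moves the $\widetilde{\Delta}_{n_5}$-preimage $\alpha$ only within the prescribed fiber, and that the indeterminacies of the two $3$-fold sub-brackets in (1.2) introduce no further slack beyond the usual indeterminacy of $\{\alpha,EE^{m_3}\alpha_2,EE^{m_3}E^{m_2}\alpha_1\}_{1+m_4}$. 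Correctly tracking the sign $(-1)^{m_4}$ through the iterated suspension shuffles and the James model for $H$ is the most error-prone bookkeeping in the argument.
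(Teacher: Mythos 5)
Your outline follows the same overall strategy as the paper (establish admissibility from the hypotheses, apply $H$, identify the first entry of the resulting $3$-fold bracket as a $\widetilde{\Delta}_{n_5}$-preimage, and blame the sign on a coordinate shuffle of degree $(-1)^{m_4}$), but the step that actually carries the proof is asserted rather than supplied. The sentence ``re-assembling this lift with the suspensions of the inner extensions recasts $H$ of the representative as an element of the $3$-fold Toda bracket'' is precisely the hard part, and working ``modulo the image of $\Delta$'' in the $EH\Delta$-sequence of homotopy groups will not produce it. The generalized Hopf invariant here is $H=\phi_{\s^{2n_5}}\circ h_{n_5*}\circ\phi_{\s^{n_5}}^{-1}$, so to compute $H$ on an element of the $4$-fold bracket you must first \emph{desuspend the whole bracket into the James reduced product}: replace $(E f_4,f_3,f_2,f_1)_{(1+m_4,\dots)}$ by $(i\circ f_4,f_3,f_2,f_1)_{(m_4,\dots)}$ with target $\s^{n_5}_\infty$ (this is where hypothesis (1.2), via Lemma 2.3.1, and the adjunction bookkeeping of Lemma 2.2.1(3) are consumed). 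Only then can $h_{n_5*}$ be applied at the space level; and the mechanism that turns the $4$-fold bracket into a $3$-fold one is that the bracket is by definition an intersection of two Toda brackets, the second of which has the coextension $[i\circ f_4,A_3,E^{m_4}f_3]$ as its first entry, and $h_{n_5}$ collapses $\s^{n_5}\subset\s^{n_5}_\infty$, so $h_{n_5}\circ[i\circ f_4,A_3,E^{m_4}f_3]$ factors through the cofiber $EE^{m_{[4,3]}}X_3$ as a map $g_{A_3}$. That factorization is what produces the first entry of the $3$-fold bracket; nothing in your proposal plays this role.

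A second, related inaccuracy: you describe $\alpha$ as obtained by ``lifting a null-homotopy of $E(\alpha_4\circ E^{m_4}\alpha_3)$ modulo the image of $\Delta$.'' A null homotopy into $\s^{n_5+1}$ does not by itself yield a class in $\pi_{n_3+m_3+m_4+2}(\s^{2n_5+1})$. In the paper, $\alpha=\phi_{\s^{2n_5}}(g_{A_3})$ is built from the null homotopy $A_3$ of $i\circ f_4\circ E^{m_4}f_3$ \emph{into $\s^{n_5}_\infty$}, and the identity $\widetilde{\Delta}_{n_5}(\alpha)=[f_4\circ E^{m_4}f_3]$ is then verified by computing $\partial\bigl([i\circ f_4,A_3,E^{m_4}f_3]\circ F\bigr)=f_4\circ E^{m_4}f_3$ for the characteristic map $F$ of the pair $(\s^{n_5}_\infty,\s^{n_5})$. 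Without the James-filtration picture this identification, and hence the indexing of the union on the right-hand side, is not justified. The rest of your plan (non-emptiness from condition (i) of \S2.1, the sign from $\tau(\s^1,\s^{m_4})$, the union absorbing the choice of $A_3$) is consistent with the paper, but as written the proposal has a genuine gap at its center.
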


\begin{cor}
$\displaystyle{H\{E\nu_4,2\nu_7,\eta_{10}\circ\sigma_{11},\sigma_{18}\}_{(1,0,0,0)}^{(2)}=
\bar{\sigma}_9}$.
\end{cor}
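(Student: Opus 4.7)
The plan is to apply Theorem 1.1 to the quadruple $\alpha_4=\nu_4$, $\alpha_3=2\nu_7$, $\alpha_2=\eta_{10}\circ\sigma_{11}$, $\alpha_1=\sigma_{18}$ with $m_k=0$ for every $k$, taking $X_3=\s^{10}$ and $X_5=\s^4$, so that $n_3=10$ and $n_5=4$. The first task is to verify the hypotheses. The three composition conditions in (1.1) reduce to $2\nu_5\nu_8=0$ (i.e.\ $2\nu_5^{\,2}=0$), $2\nu_7\eta_{10}\sigma_{11}=0$, and $\eta_{10}\sigma_{11}\sigma_{18}=0$; each follows from standard relations ($2\nu^2=0$, $\nu\eta=0$ in the stable range, and the vanishing of $\sigma_{11}\sigma_{18}$ unstably) together with Toda's tables. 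The triple-bracket conditions $\{\nu_5,2\nu_7,\eta_{10}\sigma_{11}\}_1=\{0\}$ and $0\in\{2\nu_7,\eta_{10}\sigma_{11},\sigma_{18}\}$ required in (1.2) can likewise be read off from Toda's tables, while (1.3) is immediate.

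Theorem 1.1 then yields
\[
H\bigl\{\nu_5,2\nu_7,\eta_{10}\sigma_{11},\sigma_{18}\bigr\}^{(2)}_{(1,0,0,0)}\subset\bigcup_{\alpha\in\widetilde{\Delta}_4^{-1}([2\nu_4^{\,2}])}\bigl\{\alpha,\eta_{11}\sigma_{12},\sigma_{19}\bigr\}_1.
\]
The next task is to describe the preimage set $\widetilde{\Delta}_4^{-1}([2\nu_4^{\,2}])\subset\pi_{12}(\s^9)$. Since $\pi_{12}(\s^9)\cong\bZ/24$ is generated by $\nu_9$, every candidate $\alpha$ is an integral multiple of $\nu_9$; the $EH\Delta$-exact sequence (whose $\Delta$ is recovered from $\widetilde{\Delta}_4$) pins down the multiple realising the class of $\nu_4\circ 2\nu_7=2\nu_4^{\,2}$. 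Existence of such a preimage is guaranteed by $E(2\nu_4^{\,2})=2\nu_5^{\,2}=0$, placing $2\nu_4^{\,2}$ in $\image\Delta$.

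The decisive computation is the evaluation of the resulting 3-fold Toda bracket $\{\nu_9,\eta_{11}\sigma_{12},\sigma_{19}\}_1$, which I would identify with $\bar\sigma_9\in\pi_{25}(\s^9)$ via a known Toda-bracket description of $\bar\sigma_n$ from Toda's book. The main obstacle is controlling the two indeterminacies: the choice of $\alpha$ inside the coset $\widetilde{\Delta}_4^{-1}([2\nu_4^{\,2}])$, and the usual coset indeterminacy of the 3-fold bracket, of shape $\alpha\circ E\pi_{\ast}(\s^{11})+\pi_{\ast}(\s^{11})\circ\sigma_{19}$. To convert the inclusion of Theorem 1.1 into the asserted equality, I would compute both indeterminacy subgroups using Toda's tables around $\pi_{25}(\s^9)$ and show that they vanish, so that the right-hand union collapses to the single class $\bar\sigma_9$.
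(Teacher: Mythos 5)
Your overall strategy coincides with the paper's: verify the hypotheses of Theorem 1.1, reduce to the union of brackets $\{\alpha,\eta_{11}\circ\sigma_{12},\sigma_{19}\}_1$ over $\alpha\in\widetilde{\Delta}_4^{-1}([2\nu_4^2])$, and evaluate that bracket as $\bar{\sigma}_9$ with vanishing indeterminacy. However, two of your verification steps are defective. First, your justification of $\alpha_2\circ\alpha_1=\eta_{10}\circ\sigma_{11}\circ\sigma_{18}=0$ rests on ``the vanishing of $\sigma_{11}\circ\sigma_{18}$ unstably,'' which is false: $\sigma_{11}\circ\sigma_{18}=\sigma_{11}^2$ is a nonzero element of order $2$ (the class $\sigma^2$ survives to the stable $14$-stem). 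The composite does vanish, but for a different reason: $\eta_{10}\circ\sigma_{11}=\varepsilon_{10}+\bar{\nu}_{10}$ and both $\varepsilon_{10}\circ\sigma_{18}$ and $\bar{\nu}_{10}\circ\sigma_{18}$ are zero by \cite[Lemma 10.7]{T}. Second, the condition $0\in\{2\nu_7,\eta_{10}\circ\sigma_{11},\sigma_{18}\}$ cannot simply be ``read off from Toda's tables'': this bracket sits in $\pi_{26}(\s^7)$ and a priori contains $\bar{\sigma}_7$-type elements. The paper has to argue that $\bar{\sigma}_7\in\{\nu_7,\varepsilon_9+\bar{\nu}_9,\sigma_{17}\}_1$, that $2\bar{\sigma}_7=0$ gives $0\in\{2\nu_7,\varepsilon_9+\bar{\nu}_9,\sigma_{17}\}_1$, and that the indeterminacy of the latter vanishes. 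As stated, your plan has no mechanism for producing this.

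A smaller but real omission: you treat $\widetilde{\Delta}_4^{-1}([2\nu_4^2])$ as if it were $\Delta^{-1}(2\nu_4^2)$ for Toda's $EH\Delta$-sequence. These are not literally the same set; the paper bridges them by invoking \cite[Theorem 2.4]{T} (that $h_{4*}:\pi_{11}(\s^4_\infty,\s^4)\to\pi_{11}(\s^8_\infty)$ is an isomorphism on $2$-primary components), so that the two preimages have the same $2$-components, and then observes that only the $2$-component matters because $\eta_{11}\circ\sigma_{12}$ and $\sigma_{19}$ are $2$-primary. You also need, as the paper notes, that $\{k\nu_9,\eta_{11}\circ\sigma_{12},\sigma_{19}\}_1=\bar{\sigma}_9$ for every odd $k$ occurring in the preimage, not just for $\nu_9$ itself; this follows since $k\nu_9=k\iota_9\circ\nu_9$ and $\bar{\sigma}_9$ has order $2$, but it is an extra step beyond ``the indeterminacy vanishes.'' The final evaluation $\{\nu_9,\eta_{11}\circ\sigma_{12},\sigma_{19}\}_1=\bar{\sigma}_9$ with zero indeterminacy is correct and matches the paper.
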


\begin{thm}
Suppose that a sequence $\vec{\bm \alpha}=(\alpha_4,\alpha_3,\alpha_2,\alpha_1)$ of homotopy classes $\alpha_k\in[X_k,X_{k+1}]$ is quasi-admissible (see \S3.1 for definition), $X_k=\s^{n_k}\ (k=1,2,5)$ with $n_5\ge 1$, $X_\ell\ (\ell=3,4)$ is a CW complex with a vertex as the base point, and 
$$
1+n_1\le 2\cdot\mathrm{Min}\{\mathrm{conn}(X_4), \mathrm{conn}(X_4\cup_{f_3}CX_3)\},
$$
where $\mathrm{conn}(Y)$ denotes the connectivity of the space $Y$. 
Then $\{\vec{\bm \alpha}\}^{(2)'}$ (see \S3.1 for definition) is not empty and
$$
H\{\vec{\bm \alpha}\}^{(2)'}\subset \Delta^{-1}(\{\alpha_4,\alpha_3,\alpha_2\})\circ E^3\alpha_1,
$$
where $\Delta^{-1}(B)=\phi_{\s^{2n_5}}h_{n_5*}\partial^{-1}(B)$ for $B\subset\pi_{n_2+1}(\s^{n_5})$. 
Here see \S2.3 and \S2.4 for notations:
$$
\pi_{n_2+3}(\s^{2n_5+1})\overset{\phi_{\s^{2n_5}}}{\longleftarrow}\pi_{n_2+2}(\s^{2n_5}_\infty)\overset{h_{n_5*}}{\longleftarrow}\pi_{n_2+2}(\s^{n_5}_\infty,\s^{n_5})\overset{\partial}{\longrightarrow}\pi_{n_2+1}(\s^{n_5}).
$$
\end{thm}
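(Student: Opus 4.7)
The plan is to construct an explicit representative $\beta$ of $\{\vec{\bm\alpha}\}^{(2)'}$, lift its James adjoint through $\s^{n_5}_\infty$, and identify the lift with an element of $\Delta^{-1}(\{\alpha_4,\alpha_3,\alpha_2\})$ postcomposed with $E^3\alpha_1$.

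First, I would use the quasi-admissibility of $\vec{\bm\alpha}$ (as formalized in \S3.1) to fix null-homotopies for the consecutive composites $\alpha_{k+1}\circ\alpha_k$ and build coherent extensions over the successive mapping cones $C_{f_3}=X_4\cup_{f_3}CX_3$ and $C_{f_3}\cup_{\tilde f_2}CEX_2$. The connectivity hypothesis $1+n_1\le 2\min\{\mathrm{conn}(X_4),\mathrm{conn}(C_{f_3})\}$ is a metastability bound ensuring that the obstructions to extending $\alpha_4$ over these cones lie in ranges controlled by suspension, so that a representative $\beta$ of the $(2)'$-bracket exists; this simultaneously proves non-emptiness. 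In particular the ``prime'' in $(2)'$ should correspond to one of the extensions being chosen before a final desuspension, so that $\beta$ lands on the sphere $\s^{n_5}$ side without additional identifications beyond those built into the cone tower.

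Next, to compute $H\beta$, I would work with the James reduced product $\s^{n_5}_\infty\simeq\Omega E\s^{n_5}$ and the combinatorial James-Hopf map $h_{n_5}\colon\s^{n_5}_\infty\to\s^{2n_5}_\infty$. The adjoint $\tilde\beta$ factors through the finite mapping cone into $\s^{n_5}_\infty$; its restriction to $\s^{n_5}$ recovers a representative of the triple bracket $\{\alpha_4,\alpha_3,\alpha_2\}\subset\pi_{n_2+1}(\s^{n_5})$, so the chosen extension defines a relative class in $\pi_{n_2+2}(\s^{n_5}_\infty,\s^{n_5})$ whose boundary $\partial$ is that representative. Applying $h_{n_5*}$ and the suspension adjoint $\phi_{\s^{2n_5}}$ produces an element $\gamma\in\pi_{n_2+3}(\s^{2n_5+1})$ lying in $\Delta^{-1}(\{\alpha_4,\alpha_3,\alpha_2\})$. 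Since $\alpha_1$ enters only through composition on the right (the outermost face of the cone tower), $H\beta$ takes the form $\gamma\circ E^3\alpha_1$.

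The main obstacle will be the bookkeeping of indeterminacies: every choice of null-homotopy, extension over a higher cone, and lift to $\s^{n_5}_\infty$ introduces ambiguity, and one must verify that all such ambiguities are absorbed by the indeterminacy of $\Delta^{-1}(\{\alpha_4,\alpha_3,\alpha_2\})\circ E^3\alpha_1$ rather than escaping it. The metastability hypothesis controls the primary obstructions, but the precise identification requires a careful diagram chase through the exact sequence of the pair $(\s^{n_5}_\infty,\s^{n_5})$ intertwined with the EHP-sequence, keeping the $\partial$, $h_{n_5*}$, and $\phi_{\s^{2n_5}}$ steps simultaneously coherent with the mapping-cone model of the 4-fold bracket.
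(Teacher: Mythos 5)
Your overall mechanism (lift through $\s^{n_5}_\infty$, produce a relative class in $\pi_{n_2+2}(\s^{n_5}_\infty,\s^{n_5})$ whose boundary represents the triple bracket, then push forward by $h_{n_5*}$ and $\phi_{\s^{2n_5}}$) is essentially a re-derivation of Toda's Proposition 2.6, which already asserts $H\{E\bar\gamma,\bar\beta,E\alpha\}_1=\Delta^{-1}(\bar\gamma\circ\bar\beta)\circ E^2\alpha$; the paper's proof simply quotes that result. The genuine gap in your proposal is the reduction that makes this formula applicable, and you have assigned the connectivity hypothesis to the wrong job. Non-emptiness of $\{\vec{\bm\alpha}\}^{(2)'}$ does not use the connectivity bound at all: quasi-admissibility says, by definition, that $\widetilde{E}\vec{\bm A}$ is an admissible triple for $E\vec{\bm f}$, and then $\{E\vec{\bm f}\,;\widetilde{E}\vec{\bm A}\}^{(2)}$ is non-empty by Proposition A.2. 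There is no obstruction-theoretic extension problem left to solve at that stage.

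What the connectivity bound actually buys is a desuspension. By its definition as an intersection, $\{E\vec{\bm f}\,;\widetilde{E}\vec{\bm A}\}^{(2)}$ is contained in the single three-fold bracket $\{[Ef_4,\widetilde{E}A_3,Ef_3],(Ef_3,\widetilde{E}A_2,Ef_2),-E^2f_1\}=\{E[f_4,A_3,f_3],E(f_3,A_2,f_2),E^2f_1\}$, all of whose entries are suspensions. In that form the restriction of your lift to $\s^{n_5}$ does \emph{not} visibly produce the unsuspended composite $[f_4,A_3,f_3]\circ(f_3,A_2,f_2)\in\{\alpha_4,\alpha_3,\alpha_2\}\subset\pi_{n_2+1}(\s^{n_5})$, which is what $\partial$ of your relative class must equal for the conclusion to read $\Delta^{-1}(\{\alpha_4,\alpha_3,\alpha_2\})\circ E^3\alpha_1$. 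One must first prove (this is Lemma 3.1.3) that $(f_3,A_2,f_2)\circ Ef_1\simeq *$ already before suspending, and that the indeterminacies of $\{E[f_4,A_3,f_3],(f_3,A_2,f_2),Ef_1\}_1$ and of the fully suspended bracket coincide; both facts follow from $1+n_1\le 2\cdot\mathrm{Min}\{\mathrm{conn}(X_4),\mathrm{conn}(X_4\cup_{f_3}CX_3)\}$ via Freudenthal applied to $[EX_1,X_4]\to[E^2X_1,EX_4]$ and to $[E^2X_1,X_4\cup_{f_3}CX_3]\to[E^3X_1,E(X_4\cup_{f_3}CX_3)]$. Only after this desuspension is the bracket of the shape $\{E\bar\gamma,\bar\beta,E\alpha\}_1$ to which Toda's Proposition 2.6 (equivalently, your relative-class diagram chase) applies. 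Without that step your ``restriction to $\s^{n_5}$ recovers a representative of the triple bracket'' claim is unjustified, and this is the one place where the connectivity hypothesis is genuinely needed.
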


\begin{cor}
\begin{enumerate}
\item $(\sigma_{12},\nu_{19},2\nu_{22},\nu_{25})$ is not admissible (see \S2.1 for definition)  but quasi-admissible. 
\item $H\{\sigma_{12},\nu_{19},2\nu_{22},\nu_{25}\}^{(2)'}=\nu_{25}^2$. 
\item If $\lambda_0\in\{\sigma_{12},\nu_{19},2\nu_{22},\nu_{25}\}^{(2)'}$, then $2(\lambda_0\circ\nu_{31})\equiv \sigma_{13}\circ\kappa_{20}\ \mathrm{mod}\ \sigma_{13}^3$ 
and the order of $\lambda_0\circ\nu_{31}$ is four. 
\end{enumerate}
\end{cor}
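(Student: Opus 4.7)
The plan is to treat the three parts in order: (1) reduces to table lookups in \cite{T} together with a map-level construction for quasi-admissibility, (2) is a direct application of Theorem~1.3, and (3) follows from a juggling argument for the 4-fold bracket.

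For (1) I would first verify the three pairwise composites: $\sigma_{12}\circ\nu_{19}=0$ (from $\sigma\nu=0$ stably, with the unstable value in \cite{T}), and $\nu_{19}\circ 2\nu_{22}=0=2\nu_{22}\circ\nu_{25}$ (both from $2\nu^2=0$ in the stable range). Non-admissibility then follows from showing that one of the 3-fold brackets $\{\sigma_{12},\nu_{19},2\nu_{22}\}\subset\pi_{26}(\s^{12})$ or $\{\nu_{19},2\nu_{22},\nu_{25}\}\subset\pi_{29}(\s^{19})$ fails to contain $0$ modulo its indeterminacy; by inspection in \cite{T}, the likely culprit is $\{\nu_{19},2\nu_{22},\nu_{25}\}$, whose stable version $\{\nu,2\nu,\nu\}\in\pi_{10}^S$ is nonzero. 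Quasi-admissibility as defined in \S3.1, being a weaker map-level requirement, is then built directly from the null-homotopies of the three composites, without appealing to vanishing of any 3-fold bracket.

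For (2), with quasi-admissibility in hand, the remaining hypotheses of Theorem~1.3 are immediate: $X_1=\s^{28},\,X_2=\s^{25},\,X_5=\s^{12}$ are spheres with $n_5=12\ge 1$; $X_3=\s^{22}$ and $X_4=\s^{19}$ carry standard CW structures with $0$-cell basepoints; and $1+n_1=29\le 36=2\mathrm{Min}\{\mathrm{conn}(\s^{19}),\mathrm{conn}(\s^{19}\cup_{\nu_{19}}C\s^{22})\}$. The theorem then gives
\[
H\{\sigma_{12},\nu_{19},2\nu_{22},\nu_{25}\}^{(2)'}\subset\Delta^{-1}(\{\sigma_{12},\nu_{19},2\nu_{22}\})\circ\nu_{28}.
\]
I would identify the 3-fold bracket in $\pi_{26}(\s^{12})$ from \cite{T}, trace it through $\Delta^{-1}=\phi_{\s^{24}}\,h_{12*}\,\partial^{-1}$ of \S2.3--2.4 to a coset of $\pi_{28}(\s^{25})$, and compose with $\nu_{28}$. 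Since $\pi_{31}(\s^{25})$ is stably $\bZ/2$ generated by $\nu^2$, checking that the resulting coset lies in and exhausts that $\bZ/2$ identifies it as $\nu_{25}^2$.

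For (3), given $\lambda_0$ in the 4-fold bracket, I would apply the shuffle/juggling formula for 4-fold brackets composed with $\nu_{31}$ on the right, pushing $\nu$ into the last slot. After one suspension this yields a containment of $2(\lambda_0\circ\nu_{31})$ in $\sigma_{13}\circ B$ for an inner 4-fold bracket $B$ which, by Toda's original definition of $\kappa$ as a secondary composition of $\nu$-type generators, contains $\kappa_{20}$; the residual indeterminacy collapses onto $\sigma_{13}^3$, giving $2(\lambda_0\circ\nu_{31})\equiv\sigma_{13}\kappa_{20}\ \mathrm{mod}\ \sigma_{13}^3$. For the order, $\sigma_{13}\kappa_{20}$ is nonzero of order $2$ by \cite{T}, so $\lambda_0\circ\nu_{31}$ has order at least $4$; the matching upper bound of $4$ follows from the $2$-primary structure of the relevant homotopy group of $\s^{13}$ in \cite{T}. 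The principal technical obstacle throughout is controlling the several indeterminacies so that the containments in Theorem~1.3 and in the juggling formula actually collapse to the stated equalities; this reduces to subgroup computations in $\pi_{31}(\s^{25})$ and $\pi_{34}(\s^{13})$ via \cite{T}, and the careful bookkeeping of the indeterminacy contributions from the intermediate 3-fold brackets and from the $\Delta^{-1}$-step is where errors are most likely.
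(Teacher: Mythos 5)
There is a genuine gap in your part (1), and it propagates into how the rest of the argument must be organized. You guess that the obstruction to admissibility is $\{\nu_{19},2\nu_{22},\nu_{25}\}$, whose ``stable version is nonzero''; in fact the paper's Lemma~3.2.1 proves $\{\nu_n,2\nu_{n+3},\nu_{n+6}\}=\{0\}$ for $n\ge 12$ (by composing with $\eta$ and comparing with $\eta^2\mu=4\zeta\ne 0$), so that bracket vanishes and cannot obstruct anything. The actual obstruction is the \emph{other} bracket: Lemma~3.2.3 computes $\{\sigma_{12},\nu_{19},2\nu_{22}\}=\Delta\nu_{25}+\bZ_{16}\{\sigma_{12}^2\}\oplus\bZ_2\{2\Delta\nu_{25}\}$, which misses $0$ because $\Delta\nu_{25}$ has order $4$ in $\pi_{26}^{12}$. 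Your claim that quasi-admissibility is then ``built directly from the null-homotopies of the three composites, without appealing to vanishing of any 3-fold bracket'' is also not correct: quasi-admissibility requires that the \emph{suspended} secondary composites $[Ef_{k+2},\widetilde{E}A_{k+1},Ef_{k+1}]\circ(Ef_{k+1},\widetilde{E}A_k,Ef_k)$ be null, which is a real condition. The paper meets it by \emph{choosing} $A_3,A_2$ so that $[\sigma_{12},A_3,\nu_{19}]\circ(\nu_{19},A_2,2\nu_{22})=\Delta\nu_{25}$ (possible precisely by Lemma~3.2.3) and using $E\Delta\nu_{25}=0$; the other composite is handled by Lemma~3.2.1. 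Without identifying the bracket as the coset of $\Delta\nu_{25}$ you can establish neither non-admissibility nor quasi-admissibility.

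Part (2) of your plan is essentially the paper's (apply Theorem~1.3 and then evaluate $\Delta^{-1}(\{\sigma_{12},\nu_{19},2\nu_{22}\})\circ\nu_{28}$), but again it only closes because the bracket is the coset of $\Delta\nu_{25}$, whence $\nu_{25}\in\Delta^{-1}(\cdots)$ and the right-hand side is $\{\nu_{25}^2\}$. In part (3) your outline (push $\nu_{31}$ into the bracket, identify the inner bracket with $\kappa_{20}$ mod $\sigma^2$) matches the paper's strategy, but two points are glossed over in a way that would fail as written. First, the paper does not use a 4-fold juggling formula: it uses that $\lambda_0$ lies in the specific 3-fold bracket $\{\sigma_{13},[\nu_{20},B_2,2\nu_{23}],(2\nu_{23},B_1,\nu_{26})\}$ and applies Toda's $\{\alpha,\beta,\gamma\}\circ E\delta=-\alpha\circ\{\beta,\gamma,\delta\}$ with $\delta=2\nu_{30}$ --- the factor $2$ in $2(\lambda_0\circ\nu_{31})$ comes from this $2\nu_{30}$ (needed so that $(2\nu_{23},B_1,\nu_{26})\circ 2\nu_{30}\simeq *$, again via Lemma~3.2.1), not from ``one suspension.'' Second, the identification of $\{[\nu_{20},B_2,2\nu_{23}],(2\nu_{23},B_1,\nu_{26}),-2\nu_{30}\}$ with $\kappa_{20}$ mod $\sigma_{20}^2$ is not ``Toda's original definition of $\kappa$''; it is the content of Lemma~3.2.2, proved by composing with $\eta_{19}$ and using $\{\varepsilon,2\nu,\nu\}\ni\bar\varepsilon$ and $\eta\kappa=\bar\varepsilon$. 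These are the substantive computations your proposal would still need to supply.
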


A similar formula to Theorem 1.1 was given by \^{O}guchi \cite[(7.18)]{Og} for his compositions $\{\alpha_4,E^n\alpha_3,E^n\alpha_2,E^n\alpha_1\}_n$. 
While the paper \cite{Og} is difficult to read because of a few of errors and gaps 
(see for example the last paragraph of Section 4 of \cite{OO1}), 
the definition of his composition is good and complicated (cf., \cite{OO1}). 
His compositions are generally subsets of $4$-fold Toda brackets 
$\{\vec{\bm \alpha}\}^{(2)}_{\vec{\bm m}}$ 
(see \S2.1 for definition) which are easily defined and may be alternatives to his compositions. 
Mimura, Mori and Oda \cite[p.20]{MMO} had the equality $H\{\nu_5,E(2\nu_7),E(\eta_{10}\circ\sigma_{11}),E(\sigma_{18})\}_1=\bar{\sigma}_9$ from  \cite[(7.18)]{Og} and used it  to compute $\pi_{28}(\s^5)$. 
We prefer the equality in Corollary 1.2 to their equality because of the reason stated above. 

Mimura \cite[p.175]{M1} defined 4-fold brackets and named them the higher compositions. 
His definition is incomplete (see for example lines 3--5 on p.57 of \cite{OO1}) and so some of his assertions are not effective. 
By the aid of his $4$-fold brackets he made assertions similar to our Corollary 1.4(2),(3) 
in \cite[p.308]{M2} and used them to compute $\pi_{n+21}(\s^n)$. 
By using Corollary 1.4 instead of his discussion \cite[p.308]{M2} 
we can follow his observation about the quadruple $(\sigma_{12},\nu_{19},2\nu_{22},\nu_{25})$ 
which is needed for Theorem~A of \cite{M2}. 
We should remark that there is a non-serious gap in the proof of the equality 
$\eta_6\circ\bar{\kappa}_7=E\alpha$ on \cite[p.306]{M2}. 
Mimura proved it from \cite[Lemma 6.1]{M1} where he asserted $\eta_7\circ\bar{\kappa}_8\in\{\nu^2_7,2\iota_{13},\kappa_{13}\}$ without proof. 
Nevertheless we can fill the gap with a correct proof which  
is typical and may be known for experts (cf.,\cite{Os}). 

Section 2 consists of four subsections \S2.1 -- \S2.4. 
We recall the definition of the $4$-fold Toda bracket $\{\alpha_4,\alpha_3,\alpha_2,\alpha_1\}_{\vec{\bm m}}^{(2)}$ in \S2.1 and the adjoint bijection $[X,\Omega^n Y]\to[E^nX,Y]$ in \S2.2. 
In \S2.3, we recall the definition of the generalized Hopf invariant and give a sufficient condition on a quadruple of maps to be admissible. 
In \S2.4, we prove Theorem 1.1 and Corollary 1.2. 
Section~3 consists of two subsections \S3.1 and \S3.2. 
In \S3.1 we define $\{\alpha_4,\alpha_3,\alpha_2,\alpha_1\}^{(2)'}$ and prove Theorem~1.3. 
In \S3.2 we prove Corollary 1.4. 
In Appendix~A we give an easy proof of \cite[Theorem 4.9]{OO1} which says that $\{\vec{\bm \alpha}\}^{(2)}_{\vec{\bm m}}$ is not empty if $\vec{\bm \alpha}$ is admissible. 


\section{$4$-fold Toda brackets $\{\vec{\bm \alpha}\}^{(2)}_{\vec{\bm m}}$ and the generalized Hopf invariant}

\numberwithin{equation}{subsection}
First we recall the notation of the classical Toda bracket. 
Given a triple $(\alpha_3,\alpha_2,\alpha_1)$ of homotopy classes $\alpha_k\in[E^{m_k}X_k,X_{k+1}]$, 
where $m_k$ are non negative integers, we denote by both $\{\alpha_3,\alpha_2,\alpha_1\}_{(m_3,m_2,m_1)}$ and $\{\alpha_3,\alpha_2,E^{m_2}\alpha_1\}_{m_3}$ the classical Toda bracket of which the original notation in \cite{T} is $\{\alpha_3,E^{m_3}\alpha_2,E^{m_3}E^{m_2}\alpha_1\}_{m_3} 
\,\big(\subset[EE^{m_3}E^{m_2}E^{m_1}X_1,X_4]\big)$. 

\subsection{Definition of $\{\alpha_4,\alpha_3,\alpha_2,\alpha_1\}^{(2)}_{(m_4,m_3,m_2,m_1)}$}
 
Suppose that a sequence of non negative integers $\vec{\bm m}=(m_4, m_3,m_2,m_1)$, five pointed spaces $X_k\ (5\ge k\ge 1)$, and a quadruple $\vec{\bm \alpha}=(\alpha_4,\alpha_3,\alpha_2,\alpha_1)$ of homotopy classes $\alpha_k\in[E^{m_k}X_k,X_{k+1}]$ 
are given. 
We use the notations: $|\vec{\bm m}|=\sum_k m_k$ and $E^{m_{[k,\ell]}}=E^{m_k}E^{m_{k-1}}\cdots E^{m_\ell}$ for $4\ge k>\ell\ge 1$. 
We call $\vec{\bm \alpha}$ (or precisely $\vec{\bm \alpha}_{\vec{\bm m}}$) a {\it null quadruple} if $\alpha_{k+1}\circ E^{m_{k+1}}\alpha_k=0\ (k=3,2,1)$, and 
{\it admissible} if there exist a representative $\vec{\bm f}=(f_4,f_3,f_2,f_1)$ (or precisely $\vec{\bm f}_{\vec{\bm m}}$) of $\vec{\bm \alpha}$, that is, $f_k$ represents $\alpha_k$, and null homotopies $A_k:f_{k+1}\circ E^{n_{k+1}}f_k\simeq *\ (k=1,2,3)$ such that 
$$
[f_{k+2},A_{k+1},E^{m_{k+2}}f_{k+1}]\circ (E^{m_{k+2}}f_{k+1},\widetilde{E}^{m_{k+2}}A_k, E^{m_{[k+2,k+1]}}f_k)\simeq *\ (k=1,2).
$$ 
In the last case, we call $\vec{\bm A}=(A_3,A_2,A_1)$ 
(or precisely $\vec{\bm A}_{\vec{\bm m}}$) an {\it admissible triple} for 
$\vec{\bm f}$, and we define
\begin{align*}
\{\vec{\bm f}\,;\vec{\bm A}\}^{(2)}_{\vec{\bm m}}&=\{f_4,[f_3,A_2,E^{m_3}f_2],(E^{m_3}f_2,\widetilde{E}^{m_3}A_1,E^{m_{[3,2]}}f_1)\}_{m_4} \\
&\hspace{1cm}\cap \{[f_4,A_3,E^{m_4}f_3]\circ(\psi^{m_4}_{f_3})^{-1},(f_3,A_2,E^{m_3}f_2), -EE^{m_{[3,2]}}f_1\}_{m_4}
\end{align*}
which is a non-empty subset of the abelian group $[E^2E^{m_{[4,1]}}X_1,X_5]$ by Proposition~A.2 below. 
Frequently we abbreviate $\{\vec{\bm f}\,;\vec{\bm A}\}^{(2)}_{\vec{\bm m}}$ to $\{\vec{\bm A}\}^{(2)}_{\vec{\bm m}}$. 
For any $\delta\in \{\vec{\bm A}\}^{(2)}_{\vec{\bm m}}$ we have  
\begin{align*}
&\{\vec{\bm A}\}^{(2)}_{\vec{\bm m}}\\
&=\big([E^{m_4+1}(E^{m_3}X_3\cup_{E^{m_3}f_2} CE^{m_{[3,2]}}X_2),X_5]\circ E^{m_4+1}(E^{m_3}f_2,\widetilde{E}^{m_3}A_1,E^{m_{[3,2]}}f_1)\\
&\hspace{2cm}+f_4\circ E^{m_4}[EEE^{m_{[3,1]}}X_1,X_4]+\delta \big)\\
&\qquad \cap\big([E^{m_4+1}EE^{m_{[3,2]}}X_2,X_5]\circ E^{m_4+1}(-EE^{m_{[3,2]}}f_1)\\
&\hspace{2cm}+[f_4,A_3,E^{m_4}f_3]\circ(\psi^{m_4}_{f_3})^{-1}\circ E^{m_4}[EEE^{m_{[3,1]}}X_1,X_4\cup_{f_3} CE^{m_3}X_3] +\delta\big)\\
&\supset [E^{m_4+1}EE^{m_{[3,2]}}X_2,X_5]\circ E^{m_4+1}(-EE^{m_{[3,2]}}f_1)
+f_4\circ E^{m_4}[EEE^{m_{[3,1]}}X_1,X_4] +\delta.
\end{align*}
Hence, as seen on \cite[p.35]{OO1}, 
\begin{equation}
\mathrm{Indet}\{\vec{\bm A}\}_{\vec{\bm m}}^{(2)}\supset[E^2E^{m_{[4,2]}}X_2,X_5]\circ E^2E^{m_{[4,2]}}f_1+f_4\circ E^{m_4}[E^2E^{m_{[3,1]}}X_1,X_4].
\end{equation}
We define
$$
\{\vec{\bm f}\}^{(2)}_{\vec{\bm m}}=\bigcup\{\vec{\bm A}\}^{(2)}_{\vec{\bm m}}
$$
where the union $\bigcup$ is taken over all admissible triples $\vec{\bm A}$ for $\vec{\bm f}$. 
If $\vec{\bm \alpha}$ is admissible, then for any representative $\vec{\bm f}$ of $\vec{\bm \alpha}$ there exists an admissible triple for $\vec{\bm f}$ by \cite[Proposition 2.11]{OO1} and $\{\vec{\bm f}\}^{(2)}_{\vec{\bm m}}$ depends only on $\vec{\bm \alpha}$ by \cite[(2.7), Lemma 2.9, Lemma 2.10]{OO1}. 
Therefore we can denote $\{\vec{\bm f}\}^{(2)}_{\vec{\bm m}}$ by $\{\vec{\bm \alpha}\}_{\vec{\bm m}}^{(2)}$. 
If $\vec{\bm \alpha}$ is not admissible, $\{\vec{\bm \alpha}\}_{\vec{\bm m}}^{(2)}$ denotes the empty set. 
When $m_k=0$ for all $k$, we abbreviate 
$\{\vec{\bm f}\,;\vec{\bm A}\}_{\vec{\bm m}}^{(2)}$, 
$\{\vec{\bm A}\}^{(2)}_{\vec{\bm m}}$, 
$\{\vec{\bm f}\}^{(2)}_{\vec{\bm m}}$ and $\{\vec{\bm \alpha}\}_{\vec{\bm m}}^{(2)}$ to $\{\vec{\bm f}\,;\vec{\bm A}\}^{(2)}$, 
$\{\vec{\bm A}\}^{(2)}$, 
$\{\vec{\bm f}\}^{(2)}$ and $\{\vec{\bm \alpha}\}^{(2)}$ respectively. 

\begin{lemma'}
Let $\vec{\bm f}=(f_4,\dots,f_1)$ be an admissible sequence of maps $f_k:E^{m_k}X_k\to X_{k+1}\ (1\le k\le 4)$ and $(A_3,A_2,A_1)$ an admissible triple for $\vec{\bm f}$. 
Then, given a map $h:X_5\to X_5'$, we have 
$$
h_*\{\vec{\bm f}\}^{(2)}_{\vec{\bm m}}\subset \{h\circ f_4,f_3,f_2,f_1\}^{(2)}_{\vec{\bm m}}.
$$ 
If moreover $h$ is a homotopy equivalence, then 
$$
h_*\{\vec{\bm f}\}^{(2)}_{\vec{\bm m}}= \{h\circ f_4,f_3,f_2,f_1\}^{(2)}_{\vec{\bm m}}.
$$ 
\end{lemma'}
\begin{proof}
Suppose that $\vec{\bm A}=(A_3,A_2,A_1)$ is an admissible triple for $\vec{\bm f}$. 
Then, as is easily seen, $(h\circ A_3,A_2,A_1)$ is an admissible triple for $(h\circ f_4,f_3,f_2,f_1)$ 
and $(\widetilde{E}A_3,A_2,A_1)$ is an admissible triple for $(Ef_4,f_3,f_2,f_1)$. 
We have 
\begin{align*}
h_*&\{\vec{\bm A}\}^{(2)}_{\vec{\bm m}}\\
&=h_*\big(\{f_4,[f_3,A_2,E^{m_3}f_2],(E^{m_3}f_2,\widetilde{E}^{m_3}A_1,E^{m_{[3,2]}}f_1)\}_{m_4}\\
&\hspace{2cm}\cap \{[f_4,A_3,E^{m_4}f_3]\circ(\psi^{m_4}_{f_3})^{-1},(f_3,A_2,E^{m_3}f_2),-EE^{m_{[3,2]}}f_1\}_{m_4}\big)\\
&\subset h_*\{f_4,[f_3,A_2,E^{m_3}f_2],(E^{m_3}f_2,\widetilde{E}^{m_3}A_1,E^{m_{[3,2]}}f_1)\}_{m_4}\\
&\hspace{2cm}\cap h_*\{[f_4,A_3,E^{m_4}f_3]\circ(\psi^{m_4}_{f_3})^{-1},(f_3,A_2,E^{m_3}f_2),-EE^{m_{[3,2]}}f_1\}_{m_4}\\
&\subset \{h\circ f_4,[f_3,A_2,E^{m_3}f_2],(E^{m_3}f_2,\widetilde{E}^{m_3}A_1,E^{m_{[3,2]}}f_1)\}_{m_4}\\
&\hspace{2cm}\cap \{h\circ[f_4,A_3,E^{m_4}f_3]\circ(\psi^{m_4}_{f_3})^{-1},(f_3,A_2,E^{m_3}f_2),-EE^{m_{[3,2]}}f_1\}_{m_4}\\
&= \{h\circ f_4,[f_3,A_2,E^{m_3}f_2],(E^{m_3}f_2,\widetilde{E}^{m_3}A_1,E^{m_{[3,2]}}f_1)\}_{m_4}\\
&\hspace{2cm}\cap \{[h\circ f_4,h\circ A_3,E^{m_4}f_3]\circ(\psi^{m_4}_{f_3})^{-1},(f_3,A_2,E^{m_3}f_2),-EE^{m_{[3,2]}}f_1\}_{m_4}\\
&=\{h\circ A_3,A_2,A_1\}^{(2)}_{\vec{\bm m}}.
\end{align*}
Hence the containment is proved. 
Suppose that $h$ is a homotopy equivalence and let $h^{-1}$ be a homotopy inverse of $h$. 
We have 
$$
\{\vec{\bm f}\}^{(2)}_{\vec{\bm m}}=h^{-1}_* h_*\{\vec{\bm f}\}^{(2)}_{\vec{\bm m}}
\subset h^{-1}_*\{h\circ f_4,f_3,f_2,f_1\}^{(2)}_{\vec{\bm m}}
\subset\{h^{-1}\circ h\circ f_4,f_3,f_2,f_1\}^{(2)}_{\vec{\bm m}}=\{\vec{\bm f}\}^{(2)}_{\vec{\bm m}}.
$$
Hence $\{\vec{\bm f}\}^{(2)}_{\vec{\bm m}}
= h^{-1}_*\{h\circ f_4,f_3,f_2,f_1\}^{(2)}_{\vec{\bm m}}$ and so, by applying $h_*$ from left, we have 
$h_* \{\vec{\bm f}\}^{(2)}_{\vec{\bm m}}=\{h\circ f_4,f_3,f_2,f_1\}^{(2)}_{\vec{\bm m}}$. 
This completes the proof of Lemma 2.1.1. 
\end{proof}

If $\vec{\bm \alpha}$ is admissible, then two Toda brackets $\{\alpha_4,\alpha_3,E^{m_3}\alpha_2\}_{m_4}$ and $\{\alpha_3,\alpha_2,E^{m_2}\alpha_1\}_{m_3}$ contain $0$. 
A sufficient condition that a null quadruple is admissible was given by \^{O}guchi \cite[(6.3)]{Og} (cf. \cite[Proposition 4.1]{OO1}). 
A particular case remarked 
by Mimura \cite[p.174]{M1} (cf. \cite[Proposition~4.4]{OO1}) is that if (i) or (ii) below holds, then $\vec{\bm \alpha}$ is admissible. 
\begin{align}
\{\alpha_4,\alpha_3,E^{m_3}\alpha_2\}_{m_4}=\{0\}\ \ \text{and}\ \ 
\{\alpha_3,\alpha_2,E^{m_2}\alpha_1\}_{m_3}\ni 0. \tag{i}\\
\{\alpha_4,\alpha_3,E^{m_3}\alpha_2\}_{m_4}\ni 0\ \ \text{and}\ \ 
\{\alpha_3,\alpha_2,E^{m_2}\alpha_1\}_{m_3}=\{0\}.\tag{ii}
\end{align}

It follows from \cite[Proposition 5.2]{OO4} that if $X_k$ is well pointed for all $k$ then $\{\vec{\bm f}\}^{(2)}_{\vec{\bm m}}\subset (-1)^{m_4}\{\vec{\bm f}\}^{(\ddot{s}_t)}_{\vec{\bm m}}$ (see \cite{OO3} or \cite{OO4} for the definition of $\{\vec{\bm f}\}^{(\ddot{s}_t)}_{\vec{\bm m}}$). 

\subsection{Adjoint $\Theta^n_{(X,Y)}:[X,\Omega^n Y]\cong[E^nX,Y]$}

We denote by $Y^X$ the space of maps from $X$ to $Y$ with the compact open topology. 
Set $\Omega^n Y=Y^{\s^n}$, where $\s^n=\{(x_1,\dots,x_{n+1})\in\mathbb{R}^{n+1}\,|\,\sum x^2_i=1\}$ with $*=(1,0,\dots,0)$ the base point. 
We defined the identifications 
$$
\s^n=\underbrace{\s^1\wedge\cdots\wedge\s^1}_n,\quad \s^{n_1}\wedge\s^{n_2}\wedge\cdots\wedge\s^{n_k}=\s^{n_1+\dots+n_k}
$$
in \cite{OO2} as follows. 
Let $\psi_1:(I,\partial I)\to(\s^1,*)$ be the canonical relative homeomorphism defined by $\psi_1(t)=(\cos2\pi t,-\sin2\pi t)=:\bar{t}$. 
For $n\ge 2$, let $\psi_n:(I^n,\partial I^n)\to(\s^n,*)$ be any but fixed relative homeomorphism for each $n$, for example, define it as in \cite[p.5]{T}. 
We identify $I^n/\partial I^n$ with $\s^n$ by the homeomorphism induced from $\psi_n$ for $n\ge 1$. 
We identify $\s^n$ with $\underbrace{\s^1\wedge\cdots\wedge\s^1}_n$ by the homeomorphism $h_n$ making the following diagram commutative.
$$
\begin{CD}
I^n@>\psi_1\times\cdots\times\psi_1>>\s^1\times\cdots\times\s^1\\
@V\psi_nVV @VV q V\\
\s^n@>h_n>\approx>\s^1\wedge\cdots\wedge\s^1
\end{CD}
$$ 
We identify $\s^{n_1}\wedge\cdots\wedge\s^{n_k}$ with $\s^{n_1+\dots+n_k}$ for $n_i\ge 1$ as follows. 
\begin{align*}
&(x_1\wedge\cdots\wedge x_{n_1})\wedge(x_{n_1+1}\wedge\cdots \wedge x_{n_1+n_2})\wedge\cdots\wedge (x_{n_1+\cdots+n_{k-1}+1}\wedge\cdots\wedge x_{n_1+\cdots+n_k}) \\
&=x_1\wedge x_2\wedge\cdots\wedge x_{n_1+\cdots+n_k}\hspace{3cm}  (x_i\in\s^1).
\end{align*}
Let $\theta_1:\s^1\to\s^1\vee\s^1$ be the standard comultiplication defined by 
$$
\theta_1(\bar{s})=\begin{cases}(\overline{2s},*) & 0\le s\le 1/2\\ (*,\overline{2s-1}) & 1/2\le s\le 1\end{cases}.
$$ 
For $n\ge 2$, let $\theta_n:\s^n\to\s^n\vee\s^n$ be the composite of 
$$
\s^n=\s^{n-1}\wedge\s^1\overset{1_{\s^{n-1}}\wedge\theta_1}{\longrightarrow}\s^{n-1}\wedge(\s^1\vee\s^1)=(\s^{n-1}\wedge\s^1)\vee(\s^{n-1}\wedge\s^1)=\s^n\vee\s^n.
$$
That is, $\theta_n(s_{n-1}\wedge\bar{s})=\begin{cases} (s_{n-1}\wedge\overline{2s},*) & 0\le s\le 1/2\\(*,s_{n-1}\wedge\overline{2s-1}) & 1/2\le s\le 1\end{cases}$, where 
we used the identification $\s^n=\s^{n-1}\wedge\s^1$. 
Then $\theta_n$ is a comultiplication on $\s^n$. 
For $n\ge 1$, let  $+:\Omega^n Y\times\Omega^n Y\to \Omega^n Y$ and $+:Y^{E^nX}\times Y^{E^nX}\to Y^{E^nX}$ be the 
standard operations defined by
\begin{gather*}
(\omega+\omega')(s_{n-1}\wedge\bar{s})=\begin{cases} \omega(s_{n-1}\wedge\overline{2s}) & 0\le s\le 1/2\\ \omega'(s_{n-1}\wedge\overline{2s-1}) & 1/2\le s\le 1\end{cases},\\ (a+b)(x\wedge s_{n-1}\wedge\bar{s})=\begin{cases} a(x\wedge s_{n-1}\wedge\overline{2s}) & 0\le s\le 1/2\\ b(x\wedge s_{n-1}\wedge\overline{2s-1}) & 1/2\le s\le 1\end{cases},
\end{gather*}
where $s_{n-1}\in\s^{n-1}$ and we take off $s_{n-1}$ if $n=1$. 
Let $\Theta_{(X,Y)}^n:(\Omega^n Y)^X\to Y^{E^nX}$ be defined by $\Theta_{(X,Y)}^n(f)(x\wedge s_n)=f(x)(s_n)\ (x\in X, s_n\in\s^n)$. 
Then it is a bijection and satisfies $\Theta_{(X,Y)}^n(f+g)=\Theta_{(X,Y)}^n(f)+\Theta_{(X,Y)}^n(g)$. 
Its inverse ${\Theta_{(X,Y)}^n}^{-1}:Y^{E^nX}\to(\Omega^n Y)^{X}$ is given by ${\Theta_{(X,Y)}^n}^{-1}(g)(x)(s_n)=g(x\wedge s_n)$. 
Moreover if $n\ge 1$ then $\Theta_{(X,Y)}^n$ induces an isomorphism of groups 
$$
\Theta_{(X,Y)}^n:[X, \Omega^n Y]\cong [E^nX, Y]
$$
of which the inverse $[E^nX,Y]\to[X,\Omega^n Y]$ is induced from ${\Theta_{(X,Y)}^n}^{-1}$. 
Let $ev_X:E^n\Omega^n X\to X$ be the evaluation, that is, $ev_X(\omega\wedge s_n)=\omega(s_n)$ for $\omega\in\Omega^n X$ and $s_n\in\s^n$. 

\begin{lemma'}
\begin{enumerate}
\item Given two maps $\Omega^n X_3\overset{f_2}{\longleftarrow}X_2\overset{f_1}{\longleftarrow}X_1\ (n\ge 1)$, we have 
\begin{equation}
\Theta_{(X_1,X_3)}^n(f_2\circ f_1)=\Theta_{(X_2,X_3)}^n(f_2)\circ E^nf_1,
\end{equation}
and, if moreover $f_2\circ f_1\simeq *$, then for any null homotopy 
$A:CX_1\to \Omega^n X_3$ of $f_2\circ f_1$ we have 
\begin{equation}
\Theta_{(X_2\cup_{f_1} CX_1,X_3)}^n([f_2,A,f_1])=[\Theta_{(X_2,X_3)}^n(f_2), ev_{X_3} \circ\widetilde{E}^nA,E^nf_1]\circ(\psi^n_{f_1})^{-1}.
\end{equation}
\item If the triple $(f_3,f_2,f_1)$ of $\Omega^n X_4\overset{f_3}{\longleftarrow}E^{m_3}X_3\ (n\ge 1)$ and $X_{k+1}\overset{f_k}{\longleftarrow}E^{m_k}X_k\ (k=1,2)$ is a null triple, that is, if $f_{k+1}\circ E^{m_{k+1}}f_k\simeq *\ (k=1,2)$,  then 
\begin{equation}
\begin{split}
\Theta^n_{(EE^{|\vec{\bm m}|}X_1,X_4)}&\{f_3,f_2,f_1\}_{\vec{\bm m}}\\
&= \{\Theta_{(E^{m_3}X_3,X_4)}^n(f_3), f_2, f_1\}_{(n+m_3,m_2,m_1)}
\circ(1_{E^{|\vec{\bm m}|}X_1}\wedge\tau(\s^1,\s^n))\\
&\subset[E^nEE^{|\vec{\bm m}|}X_1,X_4].
\end{split}
\end{equation}
\item Given a quadruple $\vec{\bm f}_{\vec{\bm m}}$ with $X_{k+1}\overset{f_k}{\longleftarrow} E^{m_k}X_k\ (1\le k\le 4)$ and $m_4\ge 1$, we set $f_4^*=\Theta^{-1}_{(E^{m_4-1}X_4,X_5)}(f_4):E^{m_4-1}X_4\to \Omega X_5$. 
We have
\begin{enumerate}
\item $\vec{\bm f}_{\vec{\bm m}}$ is a null quadruple if and only if $(f_4^*,f_3,f_2,f_1)_{(m_4-1,m_3,m_2,m_1)}$ is a null quadruple. 
\item $\vec{\bm f}_{\vec{\bm m}}$ is admissible if and only if $(f_4^*,f_3,f_2,f_1)_{(m_4-1,m_3,m_2,m_1)}$ is admissible. 
\item If $\vec{\bm f}_{\vec{\bm m}}$ is admissible, then 
\begin{equation}
\begin{split}
&\Theta_{(EE^{|\vec{\bm m}|}X_1,X_5)}\{f_4^*,f_3,f_2,f_1\}^{(2)}_{(m_4-1,m_3,m_2,m_1)}\\
&\hspace{2cm}=
\{f_4,f_3,f_2,f_1\}^{(2)}_{\vec{\bm m}}\circ(1_{E^{|\vec{\bm m}|}X_1}\wedge\tau(\s^1,\s^1)).
\end{split}
\end{equation}
\item If $X_5=E\s^n\ (n\ge 1)$ and $f_4=Ef'_4$ for some $f_4':E^{m_4-1}X_4\to \s^n$, then $\vec{\bm f}_{\vec{\bm m}}$ is admissible if and only if $(i\circ f_4',f_3,f_2,f_1)_{(m_4-1,m_3,m_2,m_1)}$ is admissible, where $i:\s^n\to \s^n_\infty$ is the inclusion which shall be defined in the next subsection. 
\end{enumerate}
Here ``null quadruple'' and ``admissible'' for quadruple of maps 
are defined in the obvious way i.e. 
by using $\simeq$ instead of $=$. 
\end{enumerate}
\end{lemma'}
\begin{proof}
(1) $\Theta_{(X_1,X_3)}^n(f_2\circ f_1)(x_1\wedge s_n)=(f_2\circ f_1)(x_1)(s_n)=f_2(f_1(x_1))(s_n)$ and $\Theta_{(X_2,X_3)}^n(f_2)\circ E^nf_1(x_1\wedge s_n)=f_2(f_1(x_1))(s_n)$. 
Hence $\Theta_{(X_1,X_3)}^n(f_2\circ f_1)=\Theta_{(X_2,X_3)}^n(f_2)\circ E^nf_1$. 
This proves (2.2.1). 
The equality (2.2.2) follows readily from definitions. 

(2) Let $A_2:f_3\circ E^{m_3}f_2\simeq *$ and $A_1:f_2\circ E^{m_2}f_1\simeq *$ be any null homotopies. 
We have 
\begin{align*}
&\Theta^n_{(EE^{|\vec{\bm m}|}X_1,X_4)}([f_3,A_2,E^{m_3}f_2]\circ(E^{m_3}f_2,\widetilde{E}^{m_3}A_1,E^{m_{[3,2]}}f_1))\\
&=\Theta^n_{(E^{m_3}X_3\cup_{E^{m_3}f_2}CE^{m_{[3,2]}}X_2,X_4)}([f_3,A_2,E^{m_3}f_2])\circ E^n(E^{m_3}f_2,\widetilde{E}^{m_3}A_1,E^{m_{[3,2]}}f_1)\quad(\text{by (2.2.1)})
\\
&=[\Theta^n_{(E^{m_3}X_3,X_4)}(f_3),ev_{X_4}\circ\widetilde{E}^nA_2,E^nE^{m_3}f_2]\circ(\psi^n_{E^{m_3}f_2})^{-1}\\&\hspace{2cm}\circ(\psi^n_{E^{m_3}f_2})\circ(E^nE^{m_3}f_2,\widetilde{E}^n\widetilde{E}^{m_3}A_1,E^nE^{m_{[3,2]}}f_1)
\circ(1_{E^{m_{[3,1]}}X_1}\wedge\tau(\s^1,\s^n))\\
&\hspace{3cm}(\text{by (2.2.2) and \cite[Lemma 2.4]{OO1}})\\
&=[\Theta^n_{(E^{m_3}X_3,X_4)}(f_3),ev_{X_4}\circ\widetilde{E}^nA_2,E^nE^{m_3}f_2]\circ (E^nE^{m_3}f_2,\widetilde{E}^n\widetilde{E}^{m_3}A_1,E^nE^{m_{[3,2]}}f_1)\\
&\hspace{3cm}\circ (1_{E^{m_{[3,1]}}X_1}\wedge\tau(\s^1,\s^n))\\
&\in\{\Theta^n_{(E^{m_3}X_3,X_4)}(f_3),f_2,f_1\}_{(n+m_3,m_2,m_1)}\circ (1_{E^{m_{[3,1]}}X_1}\wedge\tau(\s^1,\s^n))
\end{align*}
and so 
$$
\Theta^n_{(EE^{|\vec{\bm m}|}X_1,X_4)}\{f_3,f_2,f_1\}_{\vec{\bm m}}\subset\{\Theta^n_{(E^{m_3}X_3,X_4)}(f_3),f_2,f_1\}_{(n+m_3,m_2,m_1)}\circ (1_{E^{|\vec{\bm m}|}X_1}\wedge\tau(\s^1,\s^n)).
$$
Conversely take $\beta=[\Theta^n_{(E^{m_3}X_3,X_4)}(f_3),B_2,E^nE^{m_3}f_2]\circ(E^nE^{m_3}f_2,\widetilde{E}^{n+m_3}B_1,E^nE^{m_{[3,2]}}f_1)\in\{\Theta^n_{(E^{m_3}X_3,X_4)}(f_3),f_2,f_1\}_{(n+m_3,m_2,m_1)}$ arbitrarily. 
Set 
$$
A_2=\Theta^{-n}_{(CE^{m_{[3,2]}}X_2,X_4)}(B_2\circ(1_{E^{m_{[3,2]}}X_2}\wedge\tau(I,\s^n))):E^{m_{[3,2]}}X_2\wedge I\to \Omega^n X_4.
$$ 
Then $A_2:f_3\circ E^{m_3}f_2\simeq *$ and $ev_{X_4}\circ\widetilde{E}^nA_2=B_2$. 
Hence 
\begin{align*}
\beta&=[\Theta^n_{(E^{m_3}X_3,X_4)}(f_3),B_2,E^nE^{m_3}f_2]\circ(E^nE^{m_3}f_2,\widetilde{E}^{n+m_3}B_1,E^nE^{m_{[3,2]}}f_1)\\
&=[\Theta^n_{(E^{m_3}X_3,X_4)}(f_3),ev_{X_4}\circ \widetilde{E}^nA_2,E^nE^{m_3}f_2]\circ(E^nE^{m_3}f_2,\widetilde{E}^{n+m_3}B_1,E^nE^{m_{[3,2]}}f_1)
\\
&=\Theta^n_{(EE^{|\vec{\bm m}|}X_1,X_4)}([f_3,A_2,E^{m_3}f_2]\circ(E^{m_3}f_2,\widetilde{E}^nB_1,E^{m_{[3,2]}}f_1))
\circ(1_{E^{|\vec{\bm m}|}X_1}\wedge\tau(\s^n,\s^1))\\
&\hspace{3cm}(\text{by the above discussion})
\\
&\in\Theta^n_{(EE^{|\vec{\bm m}|}X_1,X_4)}\{f_3,f_2,f_1\}_{\vec{\bm m}}\circ(1_{E^{m_{[3,1]}}X_1}\wedge\tau(\s^n,\s^1))
\end{align*}
and so $\{\Theta^n_{(E^{m_3}X_3,X_4)}(f_3),f_2,f_1\}_{n+m_3,m_2,m_1}\subset\Theta^n_{(EE^{|\vec{\bm m}|}X_1,X_4)}\{f_3,f_2,f_1\}_{\vec{\bm m}}\circ(1_{E^{|\vec{\bm m}|}X_1}\wedge\tau(\s^n,\s^1))$. 
Therefore we have (2.2.3). 

(3) (a) Since 
\begin{align*}
f_4\circ E^{m_4}f_3
&=\Theta_{(E^{m_4-1}X_4,X_5)}(f^*_4)\circ E^{m_4}f_3\\
&=\Theta_{(E^{m_4-1}X_4,X_5)}(\Theta^{-1}_{(E^{m_4-1}X_4,X_5)}(f_4))\circ EE^{m_4-1}f_3\\
&=\Theta_{(E^{m_4-1}E^{m_3}X_3,X_5)}(\Theta_{(E^{m_4-1}X_4,X_5)}^{-1}(f_4)\circ E^{m_4-1}f_3)\quad (\text{by (2.2.1)})\\
&=\Theta_{(E^{m_4-1}E^{m_3}X_3,X_5)}(f^*_4\circ E^{m_4-1}f_3),
\end{align*}
it follows that $f_4\circ E^{m_4}f_3\simeq *$ if and only if $f^*_4\circ E^{m_4-1}f_3\simeq *$. 
This proves (a). 

(b) Suppose that $(A_3,A_2,A_1)$ is an admissible triple for $(f_4,f_3,f_2,f_1)_{\vec{\bm m}}$. 
That is, $A_k:CE^{m_{k+1}}E^{m_k}X_k\to X_{k+2}$ is a null homotopy of $f_{k+1}\circ E^{m_{k+1}}f_k$ for $k=1,2,3$ such that 
\begin{gather*}
[f_4,A_3,E^{m_4}f_3]\circ(E^{m_4}f_3,\widetilde{E}^{m_4}A_2,E^{m_{[4,3]}}f_2)\simeq *,\\
[f_3,A_2,E^{m_3}f_2]\circ(E^{m_3}f_2,\widetilde{E}^{m_3}A_1,E^{m_{[3,2]}}f_1)\simeq *.
\end{gather*}
Set $B_3=\Theta_{(CE^{m_4-1}E^{m_3}X_3,X_5)}^{-1}(A_3\circ(1_{E^{m_4-1}E^{m_3}X_3}\wedge \tau(I,\s^1)):CE^{m_4-1}E^{m_3}X_3\to\Omega X_5$. 
Then $B_3:f^*_4\circ E^{m_4-1}f_3\simeq *$ by (1) for $\Omega X_5\overset{f^*_4}{\longleftarrow}E^{m_4-1}X_4\overset{E^{m_4-1}f_3}{\longleftarrow}E^{m_4-1}E^{m_3}X_3$. 
We will prove $\alpha:=[f^*_4,B_3,E^{m_4-1}f_3]\circ(E^{m_4-1}f_3,\widetilde{E}^{m_4-1}A_2,E^{m_4-1}E^{m_3}f_2)\simeq *$. 
Once we have proved it, $(B_3,A_2,A_1)$ is an admissible triple for $(f^*_4,f_3,f_2,f_1)_{(m_4-1,m_3,m_2,m_1)}$. 
Now 
\begin{align*}
&\Theta_{(EE^{m_4-1}E^{m_{[3,2]}}X_2,X_5)}(\alpha)\\
&=\Theta_{(E^{m_4-1}X_4\cup CE^{m_4-1}E^{m_3}X_3,X_5)}([f^*_4,B_3,E^{m_4-1}f_3])\\
&\hspace{3cm}\circ E(E^{m_4-1}f_3,\widetilde{E}^{m_4-1}A_2,E^{m_4-1}E^{m_3}f_2)
\quad(\text{by (2.2.1)})\\
&=[\Theta_{(E^{m_4-1}X_4,X_5)}(f^*_4),ev_{X_5}\circ\widetilde{E}B_3,EE^{m_4-1}f_3]\circ (\psi^1_{E^{m_4-1}f_3})^{-1}
\\
&\hspace{1cm}\circ\psi^1_{E^{m_4-1}f_3}\circ(E^{m_4}f_3,\widetilde{E}^{m_4}A_2,E^{m_{[4,3]}}f_2)
\circ(1_{E^{m_4-1}E^{m_{[3,2]}}X_2}\wedge\tau(\s^1,\s^1))
\\
&\hspace{3cm}(\text{by (2.2.2) and \cite[Lemma 2.4]{OO1}})\\
&=[f_4,A_3,E^{m_4}f_3]\circ
(E^{m_4}f_3,\widetilde{E}^{m_4}A_2,E^{m_{[4,3]}}f_2)\\
&\hspace{2cm}\circ(1_{E^{m_4-1}E^{m_{[3,2]}}X_2}\wedge\tau(\s^1,\s^1))\quad(\text{since $ev_{X_5}\circ\widetilde{E}B_3=A_3$})\\
&\simeq *.
\end{align*}
Hence $\alpha\simeq *$. 

Conversely suppose that $(B_3,B_2,B_1)$ is an admissible triple for $(f_4^*,f_3,f_2,f_1)_{(m_4-1,m_3,m_2,m_1)}$. 
Then $[f_4^*,B_3,E^{m_4-1}f_3]\circ(E^{m_4-1}f_3,\widetilde{E}^{m_4-1}B_2,E^{m_4-1}E^{m_3}f_2)\simeq *$ and
\begin{align*}
*&\simeq \Theta_{(EE^{m_4-1}E^{m_{[3,2]}}X_2,X_5)}([f_4^*,B_3,E^{m_4-1}f_3]\circ(E^{m_4-1}f_3,\widetilde{E}^{m_4-1}B_2,E^{m_4-1}E^{m_3}f_2))\\
&=\Theta_{(E^{m_4-1}X_4\cup_{E^{m_4-1}f_3}CE^{m_4-1}E^{m_3}X_3,X_5)}([f_4^*,B_3,E^{m_4-1}f_3])\\
&\hspace{3cm}\circ E(E^{m_4-1}f_3,\widetilde{E}^{m_4-1}B_2,E^{m_4-1}E^{m_3}f_2)\quad(\text{by (2.2.1)})\\
&=[f_4, ev_{X_5}\circ\widetilde{E}B_3,EE^{m_4-1}f_3]\circ(\psi^1_{E^{m_4-1}f_3})^{-1}\circ E(E^{m_4-1}f_3,\widetilde{E}^{m_4-1}B_2,E^{m_4-1}E^{m_3}f_2)\\
&\hspace{3cm}(\text{by (2.2.2)})\\
&=[f_4, ev_{X_5}\circ\widetilde{E}B_3,E^{m_4}f_3]\circ(E^{m_4}f_3,\widetilde{E}^{m_4}B_2,E^{m_{[4,3]}}f_2)
\circ(1_{E^{m_4-1}E^{m_{[3,2]}}X_2}\wedge\tau(\s^1,\s^1))
\\
&\hspace{3cm}(\text{by \cite[Lemma 2.4]{OO1}}).
\end{align*}
Hence $[f_4, ev_{X_5}\circ\widetilde{E}B_3,E^{m_4}f_3]\circ(E^{m_4}f_3,\widetilde{E}^{m_4}B_2,E^{m_{[4,3]}}f_2)\simeq *$ and 
$(ev_{X_5}\circ\widetilde{E}B_3,B_2,B_1)_{\vec{\bm m}}$ is an admissible triple for $(f_4,f_3,f_2,f_1)_{\vec{\bm m}}$. 
This proves (b). 

(c) Let $(B_3,B_2,B_1)$ be an admissible triple for $(f_4^*,f_3,f_2,f_1)$. 
We have  
\begin{align*}
&\{B_3,B_2,B_1\}^{(2)}_{(m_4-1,m_3,m_2,m_1)}\\
&=\{f_4^*,[f_3,B_2,E^{m_3}f_2],(E^{m_3}f_2,\widetilde{E}^{m_3}B_1,E^{m_{[3,2]}}f_1)\}_{m_4-1}\\
&\quad\cap\{[f_4^*,B_3,E^{m_4-1}f_3]\circ(\psi^{m_4-1}_{f_3})^{-1},(f_3,B_2,E^{m_3}f_2),-EE^{m_{[3,2]}}f_1\}_{m_4-1}
\end{align*}
by definition, and
\begin{align*}
&\mathrm{Indet}\{B_3,B_2,B_1\}^{(2)}_{(m_4-1,m_3,m_2,m_1)}\\
&\supset[E^2E^{m_4-1}E^{m_{[3,2]}}X_2,\Omega X_5]\circ E^2E^{m_4-1}E^{m_{[3,2]}}f_1+f_4^*\circ E^{m_4-1}[E^2E^{m_{[3,1]}}X_1,X_4]
\end{align*}
by (2.1.1). 
It follows from (2) that
\begin{align*}
&\Theta_{(EE^{m_4-1}EE^{m_{[3,1]}}X_1,X_5)}\{f_4^*,[f_3,B_2,E^{m_3}f_2],(E^{m_3}f_2,\widetilde{E}^{m_3}B_1,E^{m_{[3,2]}}f_1)\}_{m_4-1}\\
&=
\{f_4,[f_3,B_2,E^{m_3}f_2],(E^{m_3}f_2,\widetilde{E}^{m_3}B_1,E^{m_{[3,2]}}f_1)\}_{1+m_4-1}\circ(1_{E^{m_{[4,1]}}X_1}\wedge\tau(\s^1,\s^1)),\\
&\Theta_{(EE^{m_4-1}EE^{m_{[3,1]}}X_1,X_5)}\{[f_4^*,B_3,E^{m_4-1}f_3]\circ(\psi^{m_4-1}_{f_3})^{-1},(f_3,B_2,E^{m_3}f_2),-EE^{m_{[3,2]}}f_1\}_{m_4-1}\\
&=
\{\Theta_{(E^{m_4-1}(X_4\cup_{f_3}CE^{m_3}X_3),X_5)}([f_4^*,B_3,E^{m_4-1}f_3]\circ(\psi^{m_4-1}_{f_3})^{-1}),(f_3,B_2,E^{m_3}f_2),-EE^{m_{[3,2]}}f_1\}_{m_4}\\
&\hspace{3cm}\circ(1_{E^{m_4-1}EE^{m_{[3,1]}}X_1}
\wedge\tau(\s^1,\s^1))\quad(\text{by (2.2.3)})\\
&=\{[f_4,ev_{X_5}\circ\widetilde{E}B_3,E^{m_4}f_3]\circ(\psi^{m_4}_{f_3})^{-1},(f_3,B_2,E^{m_3}f_2),-EE^{m_{[3,2]}}f_1\}_{m_4}\circ(1_{E^{m_{[4,1]}}X_1}\wedge\tau(\s^1,\s^1)),
\end{align*}
where the last equality follows from the following equalities:
\begin{align*}
&\Theta_{(E^{m_4-1}(X_4\cup_{f_3}CE^{m_3}X_3),X_5)}([f_4^*,B_3,E^{m_4-1}f_3]\circ(\psi^{m_4-1}_{f_3})^{-1})\\
&=\Theta_{(E^{m_4-1}X_4\cup CE^{m_4-1}E^{m_3}X_3,X_5)}([f^*_4,B_3,E^{m_4-1}f_3])\circ E(\psi^{m_4-1}_{f_3})^{-1}\quad (\text{by (2.2.1)})\\
&=[\Theta_{(E^{m_4-1}X_4,X_5)}(f^*_4),ev_{X_5}\circ\widetilde{E}B_3,E^{m_4}f_3]\circ(\psi^1_{E^{m_4-1}f_3})^{-1}\circ E(\psi^{m_4-1}_{f_3})^{-1}\quad (\text{by (2.2.2)})\\
&=[f_4,ev_{X_5}\circ\widetilde{E}B_3,E^{m_4}f_3]\circ(\psi^{m_4}_{f_3})^{-1}.
\end{align*}
Hence
\begin{align*}
&\Theta_{(EE^{m_4-1}EE^{m_{[3,1]}}X_1,X_5)}\{B_3,B_2,B_1\}^{(2)}_{(m_4-1,m_3,m_2,m_1)}\\
&=\big(\{f_4,[f_3,B_2,E^{m_3}f_2],(E^{m_3}f_2,\widetilde{E}^{m_3}B_1,E^{m_{[3,2]}}f_1)\}_{1+m_4-1}\\
&\hspace{2cm}\cap
\{[f_4,ev_{X_5}\circ\widetilde{E}B_3,E^{m_4}f_3]\circ(\psi^{m_4}_{f_3})^{-1},(f_3,B_2,E^{m_3}f_2),-EE^{m_{[3,2]}}f_1\}_{m_4}\big)\\&\hspace{3cm}\circ(1_{E^{m_{[4,1]}}X_1}\wedge\tau(\s^1,\s^1))\\
&=\{ev_{X_5}\circ\widetilde{E}B_3,B_2,B_1\}^{(2)}_{(m_4,m_3,m_2,m_1)}\circ(1_{E^{m_{[4,1]}}X_1}\wedge\tau(\s^1,\s^1))\\
&\hspace{3cm}(\text{since $(ev_{X_5}\circ\widetilde{E}B_3,B_2,B_1)$ is an admissible triple for $\vec{\bm f}_{\vec{\bm m}}$ by (b)}).
\end{align*}
Hence $\Theta_{(EE^{m_4-1}EE^{m_{[3,1]}}X_1,X_5)}\{f_4^*,f_3,f_2,f_1\}^{(2)}_{(m_4-1,m_3,m_2,m_1)}\subset\{f_4,f_3,f_2,f_1\}_{(m_4,m_3,m_2,m_1)}^{(2)}\circ(1_{E^{m_{[4,1]}}X_1}\wedge\tau(\s^1,\s^1))$. 

Conversely let $(A_3,A_2,A_1)_{\vec{\bm m}}$ be an admissible triple for $\vec{\bm f}_{\vec{\bm m}}$. 
Set 
$$
B_3=\Theta_{(CE^{m_4-1}E^{m_3}X_3,X_5)}^{-1}(A_3\circ(1_{E^{m_4-1}E^{m_3}X_3}\wedge\tau(I,\s^1)):CE^{m_4-1}E^{m_3}X_3\to \Omega X_5.
$$ 
Then, as in (b), we have that $(B_3,A_2,A_1)$ is an admissible triple for $(f_4^*,f_3,f_2,f_1)_{(m_4-1,m_3,m_2,m_1)}$, 
$ev_{X_5}\circ\widetilde{E}B_3=A_3$, and  
\begin{align*}
&\Theta_{(E^2E^{m_4-1}E^{m_{[3,1]}}X_1,X_5)}\{B_3,A_2,A_1\}^{(2)}_{(m_4-1,m_3,m_2,m_1)}\\
&\hspace{3cm}=\{A_4,A_2,A_1\}_{\vec{\bm m}}^{(2)}\circ(1_{E^{m_{[4,1]}}X_1}\wedge\tau(\s^1,\s^1))\quad\text{and}\\ 
&\Theta_{(E^2E^{m_4-1}E^{m_{[3,1]}}X_1,X_5)}\{f_4^*,f_3,f_2,f_1\}^{(2)}_{(m_4-1,m_3,m_2,m_1)}\\
&\hspace{4cm}\supset\{f_4,f_3,f_2,f_1\}_{\vec{\bm m}}^{(2)}\circ(1_{E^{m_{[4,1]}}X_1}\wedge\tau(\s^1,\s^1)).
\end{align*} 
Therefore we obtain (2.2.4). 
This ends the proof of (c). 

(d) In the proof we use maps $i:\s^n\to \s^n_\infty$ and $\Phi_{\s^n}:\s^n_\infty\simeq \Omega E\s^n$ which shall be defined in the next subsection. 
By (3)(b), it suffices to prove that $(f_4^*,f_3,f_2,f_1)_{(m_4-1,m_3,m_2,m_1)}$ is admissible if and only if $(i\circ f'_4,f_3,f_2,f_1)_{(m_4-1,m_3,m_2,m_1)}$ is admissible. 
Note that $\Phi_{\s^n}\circ i\circ f'_4=f_4^*$. 

First let $(K_3,A_2,A_1)$ be an admissible triple for 
$(i\circ f'_4,f_3,f_2,f_1)$. 
Since $[i\circ f'_4,K_3,E^{m_4-1}f_3]\circ(E^{m_4-1}f_3,\widetilde{E}^{m_4-1}A_2,E^{m_4-1}E^{m_3}f_2)\simeq *$, we have
\begin{align*}
*&\simeq \Phi_{\s^n}\circ[i\circ f'_4,K_3,E^{m_4-1}f_3]\circ 
(E^{m_4-1}f_3,\widetilde{E}^{m_4-1}A_2,E^{m_4-1}E^{m_3}f_2)\\
&=[\Phi_{\s^n}\circ i\circ f'_4,\Phi_{\s^n}\circ K_3,E^{m_4-1}f_3]
\circ (E^{m_4-1}f_3,\widetilde{E}^{m_4-1}A_2,E^{m_4-1}E^{m_3}f_2).
\end{align*}
Hence $(\Phi_{\s^n}\circ K_3,A_2,A_1)$ is an admissible triple for $(f_4^*,f_3,f_2,f_1)$. 

Secondly let $(L_3,A_2,A_1)$ be an admissible triple for $(f_4^*,f_3,f_2,f_1)$. 
Let $\Phi^{-1}_{\s^n}$ be a homotopy inverse of $\Phi_{\s^n}$. 
Since $f^*_4=\Phi_{\s^n}\circ i\circ f'_4$, we have 
$\Phi^{-1}_{\s^n}\circ f^*_4\simeq i\circ f'_4$. 
Let $J:\Phi^{-1}_{\s^n}\circ f^*_4\simeq i\circ f'_4$ and set 
$M_3=(\Phi^{-1}_{\s^n}\circ L_3)\bullet\big((-J)\,\bar{\circ}\,\widetilde{E}^{m_4-1}(-1_{f_3})\big)$ (see \cite{OO1} for definitions of $\bullet$ and $\bar{\circ}$). 
Then $M_3:i\circ f'_4\circ E^{m_4-1}f_3\simeq *$ by \cite[Lemma 2.10]{OO1} and  
\begin{align*}
&[i\circ f'_4, M_3, E^{m_4-1}f_3]\circ(E^{m_4-1}f_3,\widetilde{E}^{m_4-1}A_2,E^{m_4-1}E^{m_3}f_2)\\
&\simeq [\Phi^{-1}_{\s^n}\circ f^*_4,\Phi^{-1}_{\s^n}\circ L_3,E^{m_4-1}f_3]\circ(E^{m_4-1}f_3,\widetilde{E}^{m_4-1}A_2,E^{m_4-1}E^{m_3}f_2)\\
&\hspace{4cm}(\text{by \cite[Lemma 2.10]{OO1}})\\
&=\Phi^{-1}_{\s^n}\circ[f_4^*,L_3,E^{m_4-1}f_3]\circ(E^{m_4-1}f_3,\widetilde{E}^{m_4-1}A_2,E^{m_4-1}E^{m_3}f_2)\\
&\simeq *.
\end{align*}
Hence $(M_3,A_2,A_1)$ is an admissible triple for $(i\circ f'_4,f_3,f_2,f_1)$. 
Thus (d) and (3) are proved. 
This completes the proof of Lemma 2.2.1.
\end{proof}

\subsection{Generalized Hopf invariant}

Let $Y$ be a {\it special complex}, namely, $Y$ is a countable CW-complex with only one vertex, say $y_0$, and let $Y_\infty$ be the {\it reduced product complex} of $Y$ \cite{J1}. 
By definition, $Y_\infty$ is algebraically a free semi group generated by $Y-\{y_0\}$ i.e. it is the set of all ordered finite sequences of elements in $Y-\{y_0\}$ and the empty sequence is the left and right identity which is denoted by $y_0$. 
Let $Y_m$ be the set of sequences in $Y-\{y_0\}$ which have not more than $m$ terms, with the identification topology determined by $p_m:Y^m\to Y_m,\ (a_1,\dots,a_m)\mapsto a_1\cdots a_m$, of course if $a_i=y_0$ then $p_m(a_1,\dots,a_m)=a_1\cdots a_{i-1}a_{i+1}\cdots a_m$. 
Thus $Y_0=\{y_0\}$, $Y_1=Y$, and $Y_m\subset Y_{m+1}$. 
As a topological space $Y_\infty=\underset{m\to\infty}{\lim} Y_m$. 
Then $Y_\infty$ is also a special complex. 
Let $i:Y\to Y_\infty$ and $i_Y:Y\to\Omega EY$ be the canonical injections, that is, $i:Y_1\subset Y_\infty$ and $i_Y(y)(\bar{t})=y\wedge \bar{t}$. 
Let $\Phi_Y:Y_\infty\to \Omega EY$ be the {\it canonical map} \cite{J1} which is the identity on $Y$, that is, $\Phi_Y\circ i=i_Y$, and so 
\begin{equation}
\Theta_{(Y,EY)}(\Phi_Y\circ i)=1_{EY}.
\end{equation} 
The map $\Phi_Y$ is 
a weak homotopy equivalence by \cite[(5.15)]{J1} and so it  
is a homotopy equivalence, since $EY$ is a CW-complex and $\Omega EY$ has a homotopy type of CW-complex by \cite{M}. 
For any space $X$ we have a bijection 
\begin{equation}
\phi_Y=\Theta_{(X,EY)}\circ{\Phi_Y}_*:[X,Y_\infty]\to  [EX,EY]\end{equation}
which is natural in $X$. 
Notice that if $X$ is a coH-space then $\phi_Y:[X,Y_\infty]\to[EX,EY]$ is an isomorphism of groups.  

The space $Y_2/Y_1=Y\wedge Y$ is a special complex again.  
Let $h:(Y_\infty,Y)\to ((Y\wedge Y)_\infty,*)$ be the combinatorial extension (\cite[(2.5)]{J1}) of
$$
(Y_2,Y_1)\overset{q}{\longrightarrow}(Y_2/Y_1,*)=(Y\wedge Y,*)\subset((Y\wedge Y)_\infty,*),
$$
where $q$ is the quotient map. 
When $Y=\s^n$ with $n\ge 1$, we denote $h$ by $h_n:(\s^n_\infty,\s^n)\to(\s^{2n}_\infty,*)$. 
For any space $X$ we define a {\it generalized Hopf invariant} $H$ by the formula
\begin{equation}
H=\phi_{\s^{2n}}\circ h_{n*}\circ\phi_{\s^n}^{-1}
:[EX,E\s^n]\overset{\phi^{-1}_{\s^n}}{\cong}[X,\s^n_\infty]\overset{h_{n*}}{\longrightarrow}[X,\s^{2n}_\infty]\overset{\phi_{\s^{2n}}}{\cong}[EX,E\s^{2n}].
\end{equation}

Notice that $H$ is natural in $X$ and that $H$ is a homomorphism of groups if $X$ is a coH-space. 

\begin{lemma'}
If a sequence $\vec{\bm f}=(f_4,f_3,f_2,f_1)$ of maps 
$X_5=\s^n\overset{f_4}{\longleftarrow}E^{m_4}X_4$ with $n\ge 1$, $X_{k+1}\overset{f_k}{\longleftarrow}E^{m_k}X_k\ (k=1,2,3)$  
satisfies 
\begin{gather}
E(f_4\circ E^{m_4}f_3)\simeq *,\ f_3\circ E^{m_3}f_2\simeq *,\ f_2\circ E^{m_2}f_1\simeq *,\\
\text{$\{Ef_4,f_3,f_2\}_{(1+m_4,m_3,m_2)}=\{0\}$ and $\{f_3,f_2,f_1\}_{(m_3,m_2,m_1)}\ni 0$},
\end{gather}
then $(i\circ f_4,f_3,f_2,f_1)_{\vec{\bm m}}$ is admissible, where $i:\s^n\to \s^n_\infty$ is the canonical inclusion. 
\end{lemma'}
\begin{proof}
Since 
\begin{align*}
&\Theta_{(E^{m_{[4,3]}}X_3,E\s^n)}(\Phi_{\s^n}\circ i\circ f_4\circ E^{m_4}f_3)\\
&=
\Theta_{(E^{m_4}X_4,E\s^n)}(\Phi_{\s^n}\circ i\circ f_4)\circ EE^{m_4}f_3\quad(\text{by (2.2.1)})\\
&=
Ef_4\circ EE^{m_4}f_3 \quad(\text{by (2.3.1)})\\
&\simeq * \quad(\text{by (2.3.4)}),
\end{align*}
it follows that $\Phi_{\s^n}\circ i\circ f_4\circ E^{m_4}f_3\simeq *$ 
so that $i\circ f_4\circ E^{m_4}f_3\simeq *$. 
We have 
\begin{align*}
&\Theta_{(EE^{m_{[4,2]}}X_2,E\s^n)}\{\Phi_{\s^n}\circ i\circ f_4,f_3,f_2\}_{(m_4,m_3,m_2)}\\
&=\{\Theta_{(EE^{m_4}X_4,E\s^n)}(\Phi_{\s^n}\circ i\circ f_4),f_3,f_2\}_{(1+m_4,m_3,m_2)}\circ(1_{E^{m_{[4,2]}}X_2}\wedge\tau(\s^1,\s^1))\quad(\text{by (2.2.3)})\\
&=\{Ef_4,f_3,f_2\}_{(1+m_4,m_3,m_2)}\circ(1_{E^{m_{[4,2]}}X_2}\wedge\tau(\s^1,\s^1))\quad(\text{by (2.3.1)})\\
&=\{0\} \quad(\text{by (2.3.5)})
\end{align*}
and so $\{\Phi_{\s^n}\circ i\circ f_4,f_3,f_2\}_{(m_4,m_3,m_2)}=\{0\}$. 
Since ${\Phi_{\s^n}}_*:[E^{m_{[4,2]}}X_2,\s^n_\infty]\to [E^{m_{[4,2]}}X_2,\Omega E\s^n]$ is an isomorphism and 
${\Phi_{\s^n}}_*\{i\circ f_4, f_3, f_2\}_{(m_4,m_3,m_2)}\subset\{\Phi_{\s^n}\circ i\circ f_4,f_3,f_2\}_{(m_4,m_3,m_2)}=\{0\}$, we have 
$\{i\circ f_4,f_3,f_2\}_{(m_4,m_3,m_2)}=\{0\}$. 
Take null homotopies $A_2:f_3\circ E^{m_3}f_2\simeq *$ and $A_1:f_2\circ E^{m_2}f_1\simeq *$ such that $[f_3,A_2,E^{m_3}f_2]\circ(E^{m_3}f_2,\widetilde{E}^{m_3}A_1,E^{m_{[3,2]}}f_1)\simeq *$. This is possible by (2.3.5). 
Take a null homotopy $A_3:i\circ f_4\circ E^{m_4}f_3\simeq *$ arbitrarily. 
Then $(A_3,A_2,A_1)$ is an admissible triple for $(i\circ f_4,f_3,f_2,f_1)_{\vec{\bm m}}$. This completes the proof.
\end{proof}

\subsection{Proofs of Theorem 1.1 and Corollary 1.2}

Consider the following commutative diagram for $n\ge 1$, where the first row is the exact sequence for the pair $(\s^n_\infty,\s^n)$ and $q$ is the quotient. 
$$
\xymatrix{
\cdots \ar[r] &\pi_r(\s^n) \ar[r] \ar[dr]_-E & \pi_r(\s^n_\infty) \ar[r] \ar[dr]_-{h_{n*}} \ar[d]_-\cong^-{\phi_{\s^n}} & \pi_r(\s^n_\infty,\s^n) \ar[r]^-{\partial} \ar[d]^-{h_{n*}}&\pi_{r-1}(\s^n) \ar[r] \ar@{->>}[d]^-q &\cdots\\
& & \pi_{r+1}(\s^{n+1}) \ar[dr]_-H & \pi_r(\s^{2n}_\infty)  \ar[d]_-\cong^-{\phi_{\s^{2n}}} &\pi_{r-1}(\s^n)/\partial \mathrm{Ker}(h_{n*})& \\
&&&\pi_{r+1}(\s^{2n+1}) &&
}
$$
Define a homomorphism $\widetilde{\Delta}_n$ from a subgroup of $\pi_{r+1}(\s^{2n+1})$ to $\pi_{r-1}(\s^n)/\partial\mathrm{Ker}(h_{n*})$ by
$$
\widetilde{\Delta}_n=q\circ\partial\circ h_{n*}^{-1}\circ\phi_{\s^{2n}}^{-1}:\mathrm{Image}(\phi_{\s^{2n}}\circ h_{n*})\to \pi_{r-1}(\s^n)/\partial\mathrm{Ker}(h_{n*}).
$$
The next is a key from which Theorem 1.1 and Corollary 1.2 shall follow.  

\begin{prop'}
Given a sequence $\vec{\bm f}_{\vec{\bm m}}$ of maps $X_{k+1}\overset{f_k}{\longleftarrow}E^{m_k}X_k\ (1\le k\le 4)$ such that $X_5=\s^{n_5}\ (n_5\ge 1)$, $X_3=\s^{n_3}\ (n_3\ge 1)$, and 
$(i\circ f_4,f_3,f_2,f_2,f_1)_{\vec{\bm m}}$ is admissible, where $i:\s^{n_5}\to\s^{n_5}_\infty$ is the inclusion, we have 
$(Ef_4,f_3,f_2,f_1)_{(1+m_4,m_3,m_2,m_1)}$ is admissible and 
\begin{equation}
\begin{split}
H\{Ef_4,f_3,f_2,&f_1\}^{(2)}_{(1+m_4,m_3,m_2,m_1)}\\
&\subset\bigcup_{\alpha\in\widetilde{\Delta}_{n_5}^{-1}([f_4\circ E^{m_4}f_3])}(-1)^{m_4}\{\alpha,EE^{m_3}f_2,EE^{m_{[3,2]}}f_1\}_{1+m_4},
\end{split}
\end{equation}
where $H=\phi_{\s^{2n_5}}\circ{h_{n_5}}_*\circ\phi_{\s^{n_5}}^{-1} 
: [E^{|\vec{\bm m}|+3}X_1,E\s^{n_5}]\to [E^{|\vec{\bm m}|+3}X_1,E\s^{2n_5}]$ 
is the generalized Hopf invariant (2.3.3) and 
$[f_4\circ E^{m_4}f_3]\in\pi_{n_3+m_3+m_4}(\s^{n_5})/\partial\mathrm{Ker}(h_{n_5*})$ is the class represented by $f_4\circ E^{m_4}f_3$. 
\end{prop'}

\begin{proof} 
We should emphasize that, as recalled in \S2.2, we use special identifications
$$
\s^n=\underbrace{\s^1\wedge\cdots\wedge\s^1}_n,\quad \s^{n_1}\wedge\s^{n_2}\wedge\cdots\wedge\s^{n_k}=\s^{n_1+\dots+n_k}.
$$

Note that $Ef_4=\Theta_{(E^{m_4}X_4,E\s^{n_5})}(\Phi_{\s^{n_5}}\circ i\circ f_4)$. 
Let $(A_3,A_2,A_1)$ be an admissible triple for 
$(i\circ f_4,f_3,f_2,f_1)_{\vec{\bm m}}$. 
Then, as is easily seen, 
\begin{equation}
\text{$(\Phi_{\s^{n_5}}\circ A_3,A_2,A_1)$ is an admissible triple for $(\Phi_{\s^{n_5}}\circ i\circ f_4,f_3,f_2,f_1)_{\vec{\bm m}}$}.
\end{equation}
Also $(ev_{E\s^{n_5}}\circ\widetilde{E}(\Phi_{\s^{n_5}}\circ A_3),A_2,A_1)$ is an admissible triple for $(Ef_4,f_3,f_2,f_1)$. 
Indeed we have
\begin{align*}
*&\simeq\Theta_{(E^{m_{[4,2]}}X_2,E\s^{n_5})}\big(\Phi_{\s^{n_5}}\circ[i\circ f_4,A_3,E^{m_4}f_3]\circ(E^{m_4}f_3,\widetilde{E}^{m_4}A_2,E^{m_{[4,3]}}f_2)\big)\\
&=\Theta_{(E^{m_{[4,2]}}X_2,E\s^{n_5})}\big([\Phi_{\s^{n_5}}\circ i\circ f_4,\Phi_{\s^{n_5}}\circ A_3,E^{m_4}f_3]\circ(E^{m_4}f_3,\widetilde{E}^{m_4}A_2,E^{m_{[4,3]}}f_2)\big)\\
&=\Theta_{(E^{m_4}X_4\cup CE^{m_{[4,3]}}X_3,E\s^{n_5})}([\Phi_{\s^{n_5}}\circ i\circ f_4,\Phi_{\s^{n_5}}\circ A_3,E^{m_4}f_3])\\
&\hspace{2cm}\circ E(E^{m_4}f_3,\widetilde{E}^{m_4}A_2,E^{m_{[4,3]}}f_2)\qquad (\text{by (2.2.1)})\\
&=[\Theta_{(E^{m_4}X_4,E\s^{n_5})}(\Phi_{\s^{n_5}}\circ i\circ f_4),ev_{E\s^{n_5}}\circ\widetilde{E}(\Phi_{\s^{n_5}}\circ A_3),EE^{m_4}f_3]\\
&\hspace{2cm}\circ (EE^{m_4}f_3,\widetilde{E}\widetilde{E}^{m_4}A_2,EE^{m_{[4,3]}}f_2)\circ(1_{E^{m_{[4,2]}}X_2}\wedge\tau(\s^1,\s^1))\\
&\hspace{4cm}(\text{by (2.2.2) and \cite[Lemma 2.4]{OO1}})
\end{align*}
and so $[Ef_4,ev_{E\s^{n_5}}\circ\widetilde{E}(\Phi_{\s^{n_5}}\circ A_3),EE^{m_4}f_3]\circ(E^{1+m_4}f_3,\widetilde{E}^{1+m_4}A_2,E^{1+m_4}E^{m_3}f_2)\simeq *$. 

We have the following homotopy commutative diagram. 
$$
\xymatrix{
\s^{n_5}_\infty \ar[d]_-{h_{n_5}} & &E^{m_4}X_4\cup_{E^{m_4}f_3}CE^{m_{[4,3]}}X_3 \ar[ll]_-{[i\circ f_4,A_3,E^{m_4}f_3]} \ar[d]^-{q_{E^{m_4}f_3}} & \ar[l]  \\
\s^{2n_5}_\infty & &EE^{m_{[4,3]}}X_3 \ar[ll]_-{g_{A_3}} & \ar[l]
}
$$
$$
\xymatrix{
& &&EE^{m_{[4,2]}}X_2 \ar[d]^-= \ar@{-}[lll]_-{(E^{m_4}f_3,\widetilde{E}^{m_4}A_2,E^{m_{[4,3]}}f_2)} & & EE^{m_{[4,1]}}X_1 \ar[ll]_-{-EE^{m_{[4,2]}}f_1} \ar[d]^-=\\
&&& EE^{m_{[4,2]}}X_2 \ar@{-}[lll]_-{-EE^{m_{[4,3]}}f_2} & &EE^{m_{[4,1]}}X_1 \ar[ll]_-{-EE^{m_{[4,2]}}f_1}
}
$$
where $g_{A_3}$ is defined to make the first square commutative. 
We have
\begin{align*}
{h_{n_5}}_*\{A_3,A_2,A_1\}^{(2)}_{\vec{\bm m}}&\subset {h_{n_5}}_*\{[i\circ f_4,A_3,E^{m_4}f_3]\circ(\psi^{m_4}_{f_3})^{-1},(f_3,A_2,E^{m_3}f_2),-EE^{m_{[3,2]}}f_1\}_{m_4}\\
&\subset\{{h_{n_5}}\circ[i\circ f_4,A_3,E^{m_4}f_3]\circ(\psi^{m_4}_{f_3})^{-1},(f_3,A_2,E^{m_3}f_2),-EE^{m_{[3,2]}}f_1\}_{m_4}\\
&=\{g_{A_3}\circ q_{E^{m_4}f_3}\circ(\psi^{m_4}_{f_3})^{-1},(f_3,A_2,E^{m_3}f_2),-EE^{m_{[3,2]}}f_1\}_{m_4}\\
&=\{g_{A_3}\circ(1_{E^{m_3}X_3}\wedge\tau(\s^1,\s^{m_4}))\circ E^{m_4}q_{f_3},(f_3,A_2,E^{m_3}f_2),-EE^{m_{[3,2]}}f_1\}_{m_4}\\ 
&\subset\{g_{A_3}\circ(1_{E^{m_3}X_3}\wedge\tau(\s^1,\s^{m_4})),q_{f_3}\circ(f_3,A_2,E^{m_3}f_2),-EE^{m_{[3,2]}}f_1\}_{m_4}\\
&=\{g_{A_3}\circ(1_{E^{m_3}X_3}\wedge\tau(\s^1,\s^{m_4})),-EE^{m_3}f_2,-EE^{m_{[3,2]}}f_1\}_{m_4}\\
&=\{g_{A_3}\circ(1_{E^{m_3}X_3}\wedge\tau(\s^1,\s^{m_4})),EE^{m_3}f_2,EE^{m_{[3,2]}}f_1\}_{m_4}\\
&=\{g_{A_3}\circ(1_{E^{m_3}X_3}\wedge\tau(\s^1,\s^{m_4})),E^{m_3}E f_2,E^{m_3}EE^{m_2}f_1\}_{m_4}\\
&\overset{*}{=}\{g_{A_3}\circ E^{m_4}E^{m_3}E((-1)^{m_4}1_{X_3}),E^{m_3}E f_2,E^{m_3}EE^{m_2}f_1\}_{m_4}\\
&\overset{**}{=}\{g_{A_3},E^{m_3}E((-1)^{m_4}1_{X_3})\circ E^{m_3}Ef_2,E^{m_3}EE^{m_2}f_1\}_{m_4}\\
&=\{g_{A_3},E^{m_3}Ef_2\circ E^{m_3}((-1)^{m_4}1_{EE^{m_2}X_2}),E^{m_3}EE^{m_2}f_1\}_{m_4}\\
&\overset{***}{=}\{g_{A_3},E^{m_3}Ef_2,E^{m_3}((-1)^{m_4}1_{EE^{m_2}X_2})\circ E^{m_3}EE^{m_2}f_1\}_{m_4}\\
&=\{g_{A_3},E^{m_3}Ef_2,(-1)^{m_4}E^{m_3}EE^{m_2}f_1\}_{m_4}\\
&=(-1)^{m_4}\{g_{A_3},E^{m_3}Ef_2,E^{m_3}EE^{m_2}f_1\}_{m_4},
\end{align*}
where the equality $\overset{*}{=}$ holds since $1_{E^{m_3}X_3}\wedge\tau(\s^1,\s^{m_4})$ is a self homeomorphism of $\s^{n_3+m_3+m_4+1}$ with the degree $(-1)^{m_4}$ so that 
it is homotopic to $E^{m_4}E^{m_3}E((-1)^{m_4}1_{X_3})$,  
the equality $\overset{**}{=}$ holds since $E^{m_3}E((-1)^{m_4}1_{X_3})$ is a homeomorphism, and  
the equality $\overset{***}{=}$ holds since $E^{m_3}((-1)^{m_4}1_{EE^{m_2}X_2})$ is a homeomorphism. 
Hence 
\begin{equation}
{h_{n_5}}_*\{i\circ f_4,f_3,f_2,f_1\}^{(2)}_{\vec{\bm m}}\subset\bigcup(-1)^{m_4}\{g_{A_3},E^{m_3}Ef_2,E^{m_3}EE^{m_2}f_1\}_{m_4},
\end{equation}
where the union $\bigcup$ is taken over all admissible triple $\vec{\bm A}=(A_3,A_2,A_1)$ for $(i\circ f_4,f_3,f_2,f_1)$. 
By Lemma 2.1.1, we have 
$$
\Phi_{\s^{n_5}}\circ\{i\circ f_4,f_3,f_2,f_1\}^{(2)}_{\vec{\bm m}}
=\{\Phi_{\s^{n_5}}\circ i\circ f_4,f_3,f_2,f_1\}^{(2)}_{\vec{\bm m}}.
$$
Consider the following three self maps of $\s^{|\vec{\bm m}|+3}=\s^{m_1+m_2+m_3+m_4+3}$: 
$$
1_{\s^{|\vec{\bm m}|}\wedge\s^1}\wedge\tau(\s^1,\s^1),\quad  
1_{\s^{|\vec{\bm m}|}}\wedge\tau(\s^1,\s^1)\wedge 1_{\s^1},\quad 1_{\s^{m_{[3,1]}}\wedge\s^1\wedge\s^{m_4}}\wedge\tau(\s^1,\s^1).
$$
Since their degrees are $-1$, they are homotopic each other and 
the composition of any two of them is homotopic to the identity. 
By smashing $1_{X_1}$ to the above three maps from left, we have three self homeomorphisms of $E^{|\vec{\bm m}|+3}X_1$:
\begin{gather*}
\xi=1_{EE^{|\vec{\bm m}|}X_1}\wedge\tau(\s^1,\s^1),\quad
\xi'=E(1_{E^{|\vec{\bm m}|}X_1}\wedge \tau(\s^1,\s^1)),\\
\xi''=1_{E^{m_4}EE^{m_{[3,1]}}X_1}\wedge\tau(\s^1,\s^1).
\end{gather*} 
They satisfy $\xi\simeq\xi'$ and $\xi'\circ\xi'\simeq \xi''\circ\xi'\simeq 1_{E^{|\vec{\bm m}|+3}X_1}$.  
We have
\begin{align*}
&\phi_{\s^{n_5}}\{i\circ f_4,f_3,f_2,f_1\}^{(2)}_{\vec{\bm m}}=\Theta_{(E^2E^{m_{[4,1]}}X_1,E\s^{n_5})}\Phi_{\s^{n_5}}\{i\circ f_4,f_3,f_2,f_1\}^{(2)}_{\vec{\bm m}}\\
&=\Theta_{(E^2E^{m_{[4,1]}}X_1,E\s^{n_5})}\{\Phi_{\s^{n_5}}\circ i\circ f_4,f_3,f_2,f_1\}^{(2)}_{\vec{\bm m}}\\
&=\{Ef_4,f_3,f_2,f_1\}^{(2)}_{(1+m_4,m_3,m_2,m_1)}\circ\xi\\
&\quad(\text{by Lemma 2.2.1(3)(c) for $(\Theta_{(E^{m_4}X_4,E\s^{n_5})}(\Phi_{\s^{n_5}}\circ i\circ f_4),f_3,f_2,f_1)_{(1+m_4,m_3,m_2,m_1)}$})\\
&=\xi'^*\{Ef_4,f_3,f_2,f_1\}^{(2)}_{(1+m_4,m_3,m_2,m_1)}\quad(\text{since $\xi\simeq\xi'$})
\end{align*}
so that $\{Ef_4,f_3,f_2,f_1\}^{(2)}_{(1+m_4,m_3,m_2,m_1)}=(\xi'^*\circ\phi_{\s^{n_5}})\{i\circ f_4,f_3,f_2,f_1\}^{(2)}_{\vec{\bm m}}$. 
Hence
\begin{align*}
H&\{Ef_4,f_3,f_2,f_1\}^{(2)}_{(1+m_4,m_3,m_2,m_1)}
=(H\circ\xi'^*\circ\phi_{\s^{n_5}})\{i\circ f_4,f_3,f_2,f_1\}^{(2)}_{\vec{\bm m}}\\
&=(\xi'^*\circ H\circ\phi_{\s^{n_5}})\{i\circ  f_4,f_3,f_2,f_1\}^{(2)}_{\vec{\bm m}}\quad(\text{since $\xi'^*\circ H=H\circ\xi'^*$})\\
&=(\xi'^*\circ\phi_{\s^{2n_5}}\circ{h_{n_5}}_*)\{i\circ f_4,f_3,f_2,f_1\}^{(2)}_{\vec{\bm m}}\quad(\text{by (2.3.3)})\\
&\subset(\xi'^*\circ\phi_{\s^{2n_5}})\bigcup_{\vec{\bm A}}(-1)^{m_4}
\{g_{A_3},EE^{m_3}f_2,EE^{m_{[3,2]}}f_1\}_{m_4}
\qquad(\text{by (2.4.3)})\\
&=\xi'^*\bigcup_{\vec{\bm A}}(-1)^{m_4}\{\Theta_{(E^{m_4}EE^{m_3}X_3, E\s^{2n_5})}(\Phi_{\s^{2n_5}}\circ g_{A_3}),\\
&\hspace{4cm} EE^{m_3}f_2,EE^{m_{[3,2]}}f_1\}_{1+m_4}\circ\xi''\qquad 
(\text{by Lemma 2.2.1(2)})\\
&=\bigcup_{\vec{\bm A}}(-1)^{m_4}\{\Theta_{(E^{m_4}EE^{m_3}X_3, E\s^{2n_5})}(\Phi_{\s^{2n_5}}\circ g_{A_3}),
EE^{m_3}f_2,EE^{m_{[3,2]}}f_1\}_{1+m_4}\\
&\hspace{4cm}(\text{since $\xi''\circ\xi'\simeq 1_{E^{|\vec{\bm m}|+3}X_1}$})\\
&=\bigcup_{\vec{\bm A}}(-1)^{m_4}\{\phi_{\s^{2n_5}}(g_{A_3}),EE^{m_3}f_2,EE^{m_{[3,2]}}f_1\}_{1+m_4},
\end{align*}
that is, 
\begin{equation}
\begin{split}
H\{Ef_4,&f_3,f_2,f_1\}^{(2)}_{(1+m_4,m_3,m_2,m_1)}\\
&\subset\bigcup_{\vec{\bm A}}(-1)^{m_4}\{\phi_{\s^{2n_5}}(g_{A_3}), EE^{m_3}f_2,EE^{m_{[3,2]}}f_1\}_{1+m_4},
\end{split}
\end{equation}
where the union $\bigcup_{\vec{\bm A}}$ is taken over all admissible triples $\vec{\bm A}=(A_3,A_2,A_1)$ for $(i\circ f_4,f_3,f_2,f_1)$. 

Let $F:(CE^{m_{[4,3]}}\s^{n_3},E^{m_{[4,3]}}\s^{n_3},*)\to (E^{m_4}X_4\cup_{E^{m_4}f_3}CE^{m_{[4,3]}}\s^{n_3},E^{m_4}X_4,*)$ be the characteristic map. 
Then the element 
$$
[i\circ f_4,A_3,E^{m_4}f_3]\circ F\in[(CE^{m_{[4,3]}}\s^{n_3},E^{m_{[4,3]}}\s^{n_3},*),(\s^{n_5}_\infty,\s^{n_5},*)]=\pi_{n_3+m_3+m_4+1}(\s^{n_5}_\infty,\s^{n_5})
$$
satisfies 
$$
\partial([i\circ f_4,A_3,E^{m_4}f_3]\circ F)=f_4\circ E^{m_4}f_3\in\pi_{n_3+m_3+m_4}(\s^{n_5})
$$
and we have the following commutative diagram:
$$
\xymatrix{
& (CE^{m_{[4,3]}}\s^{n_3},E^{m_{[4,3]}}\s^{n_3}) \ar[d]^-F \ar[dl]_-{q_{1_{E^{m_{[4,3]}}\s^{n_3}}}}\\
(EE^{m_{[4,3]}}\s^{n_3},*) \ar[d]_-{g_{A_3}} & E^{m_4}X_4\cup_{E^{m_4}f_3}CE^{m_{[4,3]}}\s^{n_3},E^{m_4}X_4)\ar[l]^-{q_{E^{m_4}f_3}} \ar[d]^-{[i\circ f_4,A_3,E^{m_4}f_3]}\\
(\s^{2n_5}_\infty,*) & (\s^{n_5}_\infty,\s^{n_5}) \ar[l]_-{h_{n_5}}
}
$$
Hence 
$h_{n_5}\circ[i\circ f_4,A_3,E^{m_4}f_3]\circ F=g_{A_3}\circ q_{1_{E^{m_{[4,3]}}\s^{n_3}}}=g_{A_3}$, where the last equality follows from the identification $q_{1_{E^{m_{[4,3]}}\s^{n_3}}}=1_{EE^{m_{[4,3]}}\s^{n_3}}$ in 
$$
[(CE^{m_{[4,3]}}\s^{n_3},E^{m_{[4,3]}}\s^{n_3}),(EE^{m_{[4,3]}}\s^{n_3},*)]=[EE^{m_{[4,3]}}\s^{n_3},EE^{m_{[4,3]}}\s^{n_3}].
$$ 
We then have
\begin{align*}
\widetilde{\Delta}_{n_5}(\phi_{\s^{2n_5}}(g_{A_3}))&=p\circ\partial([i\circ f_4,A_3,E^{m_4}f_3]\circ F)\\
&=p(f_4\circ E^{m_4}f_3)=[f_4\circ E^{m_4}f_3]\in\pi_{n_3+m_3+m_4}(\s^{n_5})/\partial\mathrm{Ker}(h_{n_5*}),
\end{align*}
where $p:\pi_{n_3+m_3+m_4}(\s^{n_5})\to\pi_{n_3+m_3+m_4}(\s^{n_5})/\partial\mathrm{Ker}(h_{n_5*})$ is the quotient, and so 
$$
\phi_{\s^{2n_5}}(g_{A_3})\in\widetilde{\Delta}_{n_5}^{-1}([f_4\circ E^{m_4}f_3]).
$$ 
Thus (2.4.1) holds by (2.4.4). 
This completes the proof of Proposition~2.4.1.
\end{proof}

\begin{proof}[Proof of Theorem 1.1] 
Under assumptions of Theorem 1.1, $(i\circ f_4,f_3,f_2,f_1)$ 
is admissible by Lemma 2.3.1 and so Theorem 1.1 holds by Proposition 2.4.1. 
\end{proof}

\begin{proof}[Proof of Corollary 1.2] 
From the homotopy exact sequence for the pair $(\s^m_\infty,\s^m)$, Toda \cite[(2.11)]{T} considered the $EH\Delta$-sequence 
$$
\begin{CD}
\cdots @>\Delta>>\pi_i(\s^m)@>E>>\pi_{i+1}(\s^{m+1})@>H>>\pi_{i+1}(\s^{2m+1})@>\Delta>>\pi_{i-1}(\s^m)@>E>>\cdots
\end{CD}
$$
in accordance with the convention in the first paragraph of \cite[p.22]{T}. 
In particular we have the following exact sequence
\begin{equation}
\begin{CD}
\cdots @>\Delta>>\pi_i^m @>E>>\pi_{i+1}^{m+1}@>H>>\pi_{i+1}^{2m+1}@>\Delta>>\pi_{i-1}^m @>E>>\cdots 
\end{CD}
\end{equation}
where $\pi^m_i$ is the subgroup of $\pi_i(\s^m)$ defined by Toda \cite[(4.3)]{T} which satisfies $\pi^m_i\cong\pi_i(\s^m)/T$, where 
$T$ is the finite subgroup consisting of all elements of odd order. 

Consider the following sequence: 
$$
\begin{CD}
\s^4@<\nu_4<<\s^7@<2\nu_7<<\s^{10}@<\eta_{10}\circ\sigma_{11}<<\s^{18}@<\sigma_{18}<<\s^{25}.
\end{CD}
$$
First we will prove that this satisfies the assumptions of Theorem 1.1. 
(1.3) is obvious. 
Note that $\eta_n\circ\sigma_{n+1}=\varepsilon_{n}+\overline{\nu}_n$ for $n\ge 9$ by \cite[Theorem 7.1]{T} and that $\varepsilon_n\circ\sigma_{n+8}=0\ (n\ge 3)$ and $\overline{\nu}_n\circ\sigma_{n+8}=0\ (n\ge 6)$ by \cite[Lemma 10.7]{T}.  
As a consequence $(E\nu_4,2\nu_7,\eta_{10}\circ\sigma_{11},\sigma_{18})_{1,0}$ is a null quadruple, i.e. (1.1) holds. 
We will prove 
\begin{equation}
\{\nu_5, 2\nu_8, \eta_{11}\circ\sigma_{12}\}=\{0\},\quad \{2\nu_7,\eta_{10}\circ\sigma_{11},\sigma_{18}\}=\{0\}.
\end{equation}
Once we prove (2.4.6), then (1.2) holds. 
We have 
\begin{align*}
\{\nu_5,2\nu_8,\eta_{11}\circ\sigma_{12}\}&\subset\{\nu_5, 2\nu_8\circ\eta_{11},\sigma_{12}\}\\
&=\pi^5_{13}\circ\sigma_{13}+\nu_5\circ\pi^8_{20}\quad(\text{since $2\nu_8\circ\eta_{11}=0$})\\
&=\{0\}
\end{align*}
so that $\{\nu_5,2\nu_8,\eta_{11}\circ\sigma_{12}\}=\{0\}$. 
We have $\bar{\sigma}_6\in\{\nu_6,\varepsilon_8+\bar{\nu}_8,\sigma_{16}\}_1$ by \cite[p.138]{T}. 
Hence $\bar{\sigma}_7\in E\{\nu_6,\varepsilon_8+\bar{\nu}_8,\sigma_{16}\}_1=-\{\nu_7,\varepsilon_9+\bar{\nu}_9,\sigma_{17}\}_1$. 
Since the order of $\bar{\sigma}_7$ is $2$, we have $\bar{\sigma}_7\in\{\nu_7,\varepsilon_9+\bar{\nu}_9, \sigma_{17}\}_1$. 
Hence 
\begin{align*}
0&=2\bar{\sigma}_7\\
&=2\iota_7\circ\bar{\sigma}_7\quad(\text{since $\s^7$ is an H-space})\\
&\in 2\iota_7\circ\{\nu_7,\varepsilon_9+\bar{\nu}_9, \sigma_{17}\}_1\subset\{2\iota_7\circ \nu_7,\varepsilon_9+\bar{\nu}_9, \sigma_{17}\}_1=\{2\nu_7,\varepsilon_9+\bar{\nu}_9, \sigma_{17}\}_1.
\end{align*}
We have 
$$
\mathrm{Indet}\{2\nu_7,\varepsilon_9+\bar{\nu}_9, \sigma_{17}\}_1
=\pi^7_{19}\circ\sigma_{19}+2\nu_7\circ E\pi^9_{25}=\{0\}
$$
so that $\{2\nu_7,\varepsilon_9+\bar{\nu}_9, \sigma_{17}\}_1=\{0\}$, that is, $\{2\nu_7,\eta_9\circ\sigma_{10},\sigma_{17}\}_1=\{0\}$. 
Hence $0\in\{2\nu_7,\eta_{10}\circ\sigma_{11},\sigma_{18}\}$ and so 
\begin{align*}
\{2\nu_7,\eta_{10}\circ\sigma_{11},\sigma_{18}\}&=\mathrm{Indet}\{2\nu_7,\eta_{10}\circ\sigma_{11},\sigma_{18}\}=\pi^7_{19}\circ\sigma_{19}+2\nu_7\circ\pi^{10}_{26}\\
&=\{0\}\quad(\text{by \cite[Theorem 12.9]{T}}).
\end{align*}
Thus (2.4.6) is proved and so (1.2) holds. 
Therefore 
\begin{equation}
H\{E\nu_4, 2\nu_7,\eta_{10}\circ\sigma_{11},\sigma_{18}\}_{(1,0,0,0)}^{(2)}\subset
\bigcup_{\alpha\in\widetilde{\Delta}_4^{-1}([2\nu_4^2])}\{\alpha,\eta_{11}\circ\sigma_{12},\sigma_{19}\}_1
\end{equation}
by Theorem 1.1. 
Recall from \cite[Theorem 2.4]{T} that $h_4{*}:\pi_{11}(\s^4_\infty,\s^4)\to\pi_{11}(\s^8_\infty)$ is an isomorphism of $2$-primary components. 
Hence $2$-components of $\widetilde{\Delta}_4^{-1}([2\nu_4^2])$ and $\Delta^{-1}(2\nu_4^2)$ are the same, where $\Delta$ is the homomorphism of (2.4.5). 
Therefore the right hand term of (2.4.7) is $\bigcup_{\alpha\in\Delta^{-1}(2\nu_4^2)}\{\alpha,\eta_{11}\circ\sigma_{12},\sigma_{19}\}_1$, since $\eta_{11}\circ\sigma_{12}$ and $\sigma_{19}$ are $2$-primary elements. 
Consider the exact sequence (2.4.5): 
$$
\begin{CD}
\cdots@>H>>\pi^9_{12}=\bZ_8\{\nu_9\}@>\Delta>>\pi^4_{10}=\bZ_8\{\nu_4^2\}@>E>>\pi^5_{11}=\bZ_2\{\nu_5^2\}@>H>>\cdots.
\end{CD}
$$
By \cite[(5.13)]{T}, $\Delta(\nu_9)=\pm 2\nu_4^2$. 
We have 
$$
\Delta^{-1}(2\nu_4^2)=\begin{cases} \{\nu_9,5\nu_9\} & \Delta(\nu_9)=2\nu_4^2\\
\{3\nu_9,7\nu_9\} & \Delta(\nu_9)=-2\nu_4^2\end{cases}.
$$ 
We have $\mathrm{Indet}\{\nu_9,\eta_{11}\circ\sigma_{12},\sigma_{19}\}_1=\pi^9_{21}\circ\sigma_{21}+\nu_9\circ E\pi^{11}_{27}=\{0\}$ so that $\bar{\sigma}_9=E^3\bar{\sigma}_6=\{\nu_9,\eta_{11}\circ\sigma_{12},\sigma_{19}\}_1$. 
As is easily seen, $\{k\nu_9,\eta_{11}\circ\sigma_{12},\sigma_{19}\}_1=\bar{\sigma}_9$ for $k=1,3,5,7$. 
Hence $H\{E\nu_4,2\nu_7,\eta_{10}\circ\sigma_{11},\sigma_{18}\}_{(1,0,0,0)}^{(2)}=\bar{\sigma}_9$. 
This ends the proof of Corollary 1.2. 
\end{proof}

\section{4-fold Toda brackets $\{\vec{\bm \alpha}\}^{(2)'}$ and the generalized Hopf invariant}

\subsection{Definition of $\{\vec{\bm \alpha}\}^{(2)'}$}

Suppose that homotopy classes $\alpha_k\in[X_k,X_{k+1}]\ (1\le k\le 4)$ are given. 
We call the sequence $\vec{\bm \alpha}=(\alpha_4,\alpha_3,\alpha_2,\alpha_1)$ {\it quasi-admissible} if $\alpha_{k+1}\circ\alpha_k=0\ (1\le k\le 3)$ and there exist a representative $\vec{\bm f}$ of $\vec{\bm \alpha}$ and a sequence $\vec{\bm A}=(A_3,A_2,A_1)$ of 
null homotopies $A_k:f_{k+1}\circ f_k\simeq *\ (1\le k\le 3)$ such that $\widetilde{E}\vec{\bm A}=(\widetilde{E}A_3,\widetilde{E}A_2,\widetilde{E}A_1)$ is an admissible triple for $E\vec{\bm f}=(Ef_4,Ef_3,Ef_2,Ef_1)$ and so $E\vec{\bm \alpha}$ is admissible. 
In the last case we call $\vec{\bm A}$ a {\it quasi-admissible triple} for $\vec{\bm f}$. 
If $\vec{\bm \alpha}$ is quasi-admissible and $\vec{\bm f'}$ is any 
representative of $\vec{\bm \alpha}$, then there exists a quasi-admissible triple for $\vec{\bm f'}$. 
In fact

\begin{lemma'}
Suppose that $\vec{\bm A}$ is a quasi-admissible triple for $\vec{\bm f}$ and $H_k:f_k\simeq f_k'\ (1\le k\le 4)$. 
If we set $A_k'=A_k\bullet((-H_{k+1})\bar\circ(-H_k))$, then 
$\vec{\bm A'}=(A_3',A_2',A_1')$ is a quasi-admissible triple for $\vec{\bm f'}=(f_4',f_3',f_2',f_1')$ and 
\begin{gather}
\{E\vec{\bm f}\,;\widetilde{E}\vec{\bm A}\}^{(2)}=
\{E\vec{\bm f'};\widetilde{E}\vec{\bm A'}\}^{(2)},\\
\bigcup_{\vec{\bm A}}\{E\vec{\bm f}\,;\widetilde{E}\vec{\bm A}\}^{(2)}
=\bigcup_{\vec{\bm B'}}\{E\vec{\bm f'};\widetilde{E}\vec{\bm B'}\}^{(2)},
\end{gather}
where $\bigcup_{\vec{\bm A}}$ and $\bigcup_{\vec{\bm B'}}$ are unions taken over all quasi-admissible triples $\vec{\bm A}$ and $\vec{\bm B'}$ for $\vec{\bm f}$ and $\vec{\bm f'}$ respectively.
\end{lemma'}
\begin{proof}
The equality (3.1.1) holds by \cite[Proposition 2.11]{OO1}. 
Hence $\bigcup_{\vec{\bm A}}\{E\vec{\bm f}\,;\widetilde{E}\vec{\bm A}\}^{(2)}\subset\bigcup_{\vec{\bm B'}}\{E\vec{\bm f'};\widetilde{E}\vec{\bm B'}\}^{(2)}$.  
Interchanging $\vec{\bm f}$ and $\vec{\bm f'}$ each other we obtain the opposite containment. 
Hence (3.1.2) holds. This ends the proof.
\end{proof}

As is easily seen, if $\vec{\bm \alpha}$ is admissible, then it is quasi-admissible. 
As shall be proved in Corollary 1.4, quasi-admissible does not imply admissible in general. 
Another example is

\begin{exam'}
$(2\iota_2,\eta_2^2,2\iota_4,\eta_4)$ is not admissible but quasi-admissible. 
\end{exam'}

We omit a proof of the above example because we do not use it in this note. 

Suppose that $\vec{\bm \alpha}$ is quasi-admissible. 
For a quasi-admissible triple $\vec{\bm A}$ for a representative $\vec{\bm f}$ of $\vec{\bm \alpha}$, we defined, in \S2.1, 
\begin{equation}
\begin{split}
&\{E\vec{\bm f}\,;\widetilde{E}\vec{\bm A}\}^{(2)}
=\{Ef_4,[Ef_3,\widetilde{E}A_2,Ef_2],(Ef_2,\widetilde{E}A_1,Ef_1)\}\\
&\hspace{4cm}\cap\{[Ef_4,\widetilde{E}A_3,Ef_3],(Ef_3,\widetilde{E}A_2,Ef_2),-E^2f_1\}
\end{split}
\end{equation}
which is a non-empty subset of $[E^3X_1,EX_5]$ by  Proposition A.2. 
We define 
$\{\vec{\bm f}\}^{(2)'}=\bigcup\{E\vec{\bm f}\,;\widetilde{E}\vec{\bm A}\}^{(2)}$, where the union $\bigcup$ is taken over all quasi-admissible triples $\vec{\bm A}$ for $\vec{\bm f}$. 
Since $\{\vec{\bm f}\}^{(2)'}$ depends only on homotopy classes of $f_k$ by Lemma 3.1.1, we denote $\{\vec{\bm f}\}^{(2)'}$ by $\{\vec{\bm \alpha}\}^{(2)'}$, that is, 
$$
\{\vec{\bm \alpha}\}^{(2)'}=\bigcup_{\vec{\bm A}}\{E\vec{\bm f}\,;\widetilde{E}\vec{\bm A}\}^{(2)}.
$$
From the definitions, if $\vec{\bm \alpha}$ is quasi-admissible, then 
\begin{equation}
\{\vec{\bm \alpha}\}^{(2)'}\subset\{E\vec{\bm \alpha}\}^{(2)}. 
\end{equation}

\begin{lemma'}
If $\vec{\bm A}$ is a quasi-admissible triple for a representative $\vec{\bm f}$ of $\vec{\bm \alpha}$ and if $X_k\ (k=1,3,4)$ is a CW-complex with a vertex as the base point and
\begin{equation}
1+\dim X_1\le 2\cdot \mathrm{Min}\{\mathrm{conn}(X_4), \mathrm{conn}(X_4\cup_{f_3}CX_3)\},
\end{equation}
where $\mathrm{conn}(X)$ denotes the connectivity of the space $X$, then $(f_3,A_2,f_2)\circ Ef_1\simeq *$ and 
$$
\{E[f_4,A_3,f_3],E(f_3,A_2,f_2),E^2f_1\}=\{E[f_4,A_3,f_3],(f_3,A_2,f_2),Ef_1\}_1.
$$
\end{lemma'}
\begin{proof}
We have
\begin{align*}
E([f_3,&A_2,f_2]\circ(f_2,A_1,f_1))\\
&=[Ef_3,\widetilde{E}A_2,Ef_2]\circ(Ef_2,\widetilde{E}A_1,Ef_1)\circ(1_{X_1}\wedge\tau(\s^1,\s^1))\quad(\text{by \cite[Lemma 2.4]{OO1}})\\
&\simeq *.
\end{align*}
Since the suspension $E:[EX_1,X_4]\to [E^2X_1,EX_4]$ is an isomorphism by (3.1.5), it follows that $[f_3,A_2,f_2]\circ(f_2,A_1,f_1)\simeq *$ so that 
\begin{align*}
(f_3,&A_2,f_2)\circ Ef_1= -((f_3,A_2,f_2)\circ(-Ef_1))\simeq -((f_3,A_2,f_2)\circ q_{f_2}\circ(f_2,A_1,f_1))\\
&\simeq -(i_{f_3}\circ[f_3,A_2,f_2]\circ(f_2,A_1,f_1))
\quad(\text{by \cite[(5.11)]{Og} cf. \cite[Lemma 3.6]{OO1}})\\
&\simeq *.
\end{align*}
By (3.1.5), two sets below are the same 
and so the inclusion 
$
\{E[f_4,A_3,f_3],(f_3,A_2,f_2),Ef_1\}_1\subset\{E[f_4,A_3,f_3],E(f_3,A_2,f_2),E^2f_1\}
$ 
is the equality $=$.
\begin{align*}
\mathrm{Indet}&\{E[f_4,A_3,f_3],(f_3,A_2,f_2),Ef_1\}_1\\
&=[E^3X_2,EX_5]\circ E^3f_1+E[f_4,A_3,f_3]\circ E[E^2X_1,X_4\cup_{f_3}CX_3],\\
\mathrm{Indet}&\{E[f_4,A_3,f_3],E(f_3,A_2,f_2),E^2f_1\}\\
&=[E^3X_2,EX_5]\circ E^3 f_1+E[f_4,A_3,f_3]\circ[E^3X_1,E(X_4\cup_{f_3}CX_3)].
\end{align*}
This ends the proof.
\end{proof}

\begin{proof}[Proof of Theorem 1.3]
Let $\vec{\bm A}$ be a quasi-admissible triple for a representative $\vec{\bm f}$ of $\vec{\bm \alpha}$. 
We have 
\begin{align*}
\{E\vec{\bm f}\,;\widetilde{E}\vec{\bm A}\}^{(2)}&\subset\{[Ef_4,\widetilde{E}A_3,Ef_3],(Ef_3,\widetilde{E}A_2,Ef_2),-E^2f_1\}\quad(\text{by (3.1.3)})\\
&=\{E[f_4,A_3,f_3], E(f_3,A_2,f_2),E^2f_1\}\quad(\text{by \cite[Lemma 2.4]{OO1}})\\
&=\{E[f_4,A_3,f_3], (f_3,A_2,f_2),Ef_1\}_1\quad(\text{by Lemma 3.1.3})
\end{align*}
and so 
\begin{align*}
H\{E\vec{\bm f}\,;\widetilde{E}\vec{\bm A}\}^{(2)}
&\subset H\{E[f_4,A_3,f_3],(f_3,A_2,f_2),Ef_1\}_1\\
&=\Delta^{-1}([f_4,A_3,f_3]\circ(f_3,A_2,f_2))\circ E^3f_1\quad (\text{by \cite[Proposition 2.6]{T}})\\
&\subset \Delta^{-1}(\{f_4,f_3,f_2\})\circ E^3 f_1
=\Delta^{-1}(\{\alpha_4,\alpha_3,\alpha_2\})\circ E^3\alpha_1.
\end{align*}
Taking the union over all quasi-admissible triples $\vec{\bm A}$ for $\vec{\bm f}$, we have 
$$
H\{\vec{\bm \alpha}\}^{(2)'}\subset \Delta^{-1}(\{\alpha_4,\alpha_3,\alpha_2\})\circ E^3\alpha_1.
$$ 
This ends the proof.
\end{proof}

\subsection{Proof of Corollary 1.4}

To prove Corollary 1.4 we need three lemmas. 
Lemma~3.2.1 below overlaps \cite[p.185]{M1} and \cite[Proposition 10]{MO}. 

\begin{lemma'}
If $n\ge 12$, then 
$\{\nu_n,2\nu_{n+3},\nu_{n+6}\}=\{2\nu_n,\nu_{n+3},2\nu_{n+6}\}=\{0\}$ and so the quadruple $(\nu_n,2\nu_{n+3},\nu_{n+6},2\nu_{n+9})$ is admissible. 
\end{lemma'}

\begin{lemma'}
Let $n\ge 15$. 
Let $\vec{\bm f}=(f_4,f_3,f_2,f_1)$ be a representative of $(\nu_n,2\nu_{n+3},\nu_{n+6},2\nu_{n+9})$ and $(A_3,A_2,A_1)$ an admissible triple for $\vec{\bm f}$. 
Then 
\begin{gather}
\eta_{n-1}\circ\{[f_4,A_3,f_3],(f_3,A_2,f_2),-Ef_1\}=\eta_{n-1}\circ\kappa_n,\\
\{[f_4,A_3,f_3],(f_3,A_2,f_2),-Ef_1\}\equiv \kappa_n\ \mathrm{mod}\ \sigma_n^2,\\
\eta_{n-1}\circ\{\nu_n,2\nu_{n+3},\nu_{n+6},2\nu_{n+9}\}^{(2)}=\eta_{n-1}\circ\kappa_n,\\
\{\nu_n,2\nu_{n+3},\nu_{n+6},2\nu_{n+9}\}^{(2)}\equiv \kappa_n\ \mathrm{mod}\ \sigma_n^2.
\end{gather}
\end{lemma'}

\begin{lemma'}
$\{\sigma_{12},\nu_{19},2\nu_{22}\}=\Delta\nu_{25}+\bZ_{16}\{\sigma_{12}^2\}\oplus\bZ_2\{2\Delta\nu_{25}\}$.
\end{lemma'}

Before proving three lemmas, we prove Corollary 1.4 from three lemmas. 
Since 
\begin{equation}
\pi_{26}^{12}=\bZ_{16}\{\sigma_{12}^2\}\oplus\bZ_2\{\kappa_{12}\}\oplus\bZ_4\{\Delta\nu_{25}\}
\end{equation}
by \cite[Theorem 10.3]{T}, it follows from Lemma 3.2.3 that 
$\{\sigma_{12},\nu_{19},2\nu_{22}\}$ does not contain $0$ so that 
$(\sigma_{12},\nu_{19},2\nu_{22},\nu_{25})$ is not admissible. 
To economize on notations, we use the same letter for a map and its homotopy class. 
By Lemma 3.2.3 we can take $A_3:\sigma_{12}\circ\nu_{19}\simeq *$ and $A_2:\nu_{19}\circ 2\nu_{22}\simeq *$ such that $\Delta\nu_{25}=[\sigma_{12},A_3,\nu_{19}]\circ(\nu_{19},A_2,2\nu_{22})$. 
Take $A_1:2\nu_{22}\circ\nu_{25}\simeq *$ arbitrarily. 
Then $[\nu_{19},A_2,2\nu_{22}]\circ(2\nu_{22},A_1,\nu_{25})\simeq *$ by Lemma 3.2.1. 
We have
\begin{align*}
0&=E\Delta\nu_{25}=E([\sigma_{12},A_3,\nu_{19}]\circ(\nu_{19},A_2,2\nu_{22}))\\
&=[E\sigma_{12},\widetilde{E}A_3,E\nu_{19}]\circ(E\nu_{19},\widetilde{E}A_2,E(2\nu_{22}))\circ(1_{\s^{25}}\wedge\tau(\s^1,\s^1))
\quad(\text{by \cite[Lemma 2.4]{OO1}})
\end{align*}
so that $[E\sigma_{12},\widetilde{E}A_3,E\nu_{19}]\circ(E\nu_{19},\widetilde{E}A_2,E(2\nu_{22}))\simeq *$. 
Similarly we have $[E\nu_{19},\widetilde{E}A_2,E(2\nu_{22})]\circ(E(2\nu_{22}),\widetilde{E}A_1,E\nu_{25})\simeq *$. 
Therefore $(\sigma_{12},\nu_{19},2\nu_{22},\nu_{25})$ is 
quasi-admissible. 
This proves Corollary 1.4(1). 

By Theorem 1.3 and Lemma 3.2.3, we have
$$
H\{\sigma_{12},\nu_{19},2\nu_{22},\nu_{25}\}^{(2)'}
\subset \Delta^{-1}(\{\sigma_{12},\nu_{19},2\nu_{22}\})\circ\nu_{28}
=\{\nu_{25}^2\}.
$$
This proves Corollary 1.4(2). 

Let $\lambda_0\in\{\sigma_{12},\nu_{19},2\nu_{22},\nu_{25}\}^{(2)'}$. 
Then $\lambda_0\in\{\sigma_{13},\nu_{20},2\nu_{23},\nu_{26}\}^{(2)}$ 
by (3.1.4). 
Hence, by definitions, there exists an admissible triple $(B_3,B_2,B_1)$ for $(\sigma_{13},\nu_{20},2\nu_{23},\nu_{26})$ such that $\lambda_0\in\{\sigma_{13},[\nu_{20},B_2,2\nu_{23}],(2\nu_{23},B_1,\nu_{26})\}$. 
Take $B_0:\nu_{26}\circ 2\nu_{29}\simeq *$ arbitrarily. 
Then $(B_2,B_1,B_0)$ is an admissible triple for $(\nu_{20},2\nu_{23},\nu_{26},2\nu_{29})$ by Lemma 3.2.1. 
We have
\begin{align*}
(2\nu_{23},B_1,\nu_{26})\circ(-2\nu_{30})&\simeq(2\nu_{23},B_1,\nu_{26})\circ q_{\nu_{26}}\circ(\nu_{26},B_0,2\nu_{29})\\
&\simeq i_{2\nu_{23}}\circ[2\nu_{23},B_1,\nu_{26}]\circ(\nu_{26},B_0,2\nu_{29})\simeq *
\end{align*}
and so $(2\nu_{23},B_1,\nu_{26})\circ 2\nu_{30}\simeq *$. 
We then have
\begin{align*}
\lambda_0\circ 2\nu_{31}\in&\{\sigma_{13},[\nu_{20},B_2,2\nu_{23}],(2\nu_{23},B_1,\nu_{26})\}\circ 2\nu_{31}\\
&=-(\sigma_{13}\circ\{[\nu_{20},B_2,2\nu_{23}],(2\nu_{23},B_1,\nu_{26}), 2\nu_{30}\})\quad(\text{by \cite[Proposition 1.4]{T}})\\
&=\sigma_{13}\circ\{[\nu_{20},B_2,2\nu_{23}],(2\nu_{23},B_1,\nu_{26}), -2\nu_{30}\}.
\end{align*}
By (3.2.2), we have
$$
\{[\nu_{20},B_2,2\nu_{23}],(2\nu_{23},B_1,\nu_{26}), -2\nu_{30}\}\equiv 
\kappa_{20}\ \mathrm{mod}\ \sigma_{20}^2.
$$
Hence $\lambda_0\circ 2\nu_{31}\equiv \sigma_{13}\circ\kappa_{20}\ \mathrm{mod}\ \sigma_{13}^3$. 
As seen in \cite[p.308]{M2}, $\pi_{34}^{13}$ contains $\bZ_2\{\sigma_{13}\circ\kappa_{20}\}\oplus\bZ_2\{\sigma^3_{13}\}$. 
Hence the order of $\lambda_0\circ\nu_{31}$ is four. 
This proves Corollary 1.4(3). 

In the rest of the subsection we will prove three lemmas. 

\begin{proof}[Proof of Lemma 3.2.1]
It follows from \cite[(7.20)]{T} that 
$$
\mathrm{Indet}\{\nu_n,2\nu_{n+3},\nu_{n+6}\}=\mathrm{Indet}\{2\nu_n,\nu_{n+3},2\nu_{n+6}\}=\mathrm{Indet}\{2\nu_n,2\nu_{n+3},\nu_{n+6}\}=\{0\}
$$
so that each of three brackets consists of a single element. 
Set $x=\{\nu_n,2\nu_{n+3},\nu_{n+6}\}\in\pi_{n+10}^n=\bZ_2\{\eta_n\circ\mu_{n+1}\}$. 
Of course $x$ is $0$ or $\eta_n\circ\mu_{n+1}$. 
We will show $x=0$. 
We have
\begin{align*}
x\circ\eta_{n+10}&=-(\nu_n\circ\{2\nu_{n+3},\nu_{n+6},\eta_{n+9}\})\quad(\text{by \cite[Proposition 1.4]{T}})\\
&\in -(\nu_n\circ\pi_{n+11}^{n+3})=-(\nu_n\circ\{\bar{\nu}_{n+3},\varepsilon_{n+3}\})\\
&=\nu_n\circ\varepsilon_{n+3}\quad(\text{by \cite[(7.17)]{T}})\\
&=E^{n-6}\Delta(\nu_{13}^2)\quad(\text{by \cite[(7.18)]{T}})\\
&=0,
\end{align*}
that is, $x\circ\eta_{n+10}=0$. 
If $x=\eta_n\circ\mu_{n+1}$, then 
\begin{align*}
x\circ\eta_{n+10}&=\eta_n\circ\mu_{n+1}\circ\eta_{n+10}\\
&=\eta_n^2\circ\mu_{n+2}\quad(\text{by \cite[Proposition 3.1]{T}})\\
&=4\zeta_n\quad(\text{by \cite[(7.14)]{T}})\\
&\ne 0\quad(\text{by \cite[Theorem 7.4]{T}}).
\end{align*}
This is a contradiction. 
Hence $x=0$, that is, $\{\nu_n,2\nu_{n+3},\nu_{n+6}\}=\{0\}$. 

We have $\varepsilon'=\{\nu',\nu',\nu_6\}_3\in\pi_{13}^3$ by \cite[p.58]{T} and  
\begin{align*}
E^{n-3}\varepsilon'&=\pm 2\nu_n\circ\sigma_{n+3}\quad(\text{by \cite[(7.10)]{T}})\\
&=0\quad (\text{by \cite[(7.20)]{T}}).
\end{align*}
Hence we have $0=E^{n-3}\varepsilon'\in\{E^{n-3}\nu',E^{n-3}\nu',E^{n-3}\nu_6\}_3\subset\{2\nu_n,2\nu_{n+3},\nu_{n+6}\}$ and so 
$\{2\nu_n,2\nu_{n+3},\nu_{n+6}\}=\{0\}$. 
We have $\{2\nu_n,\nu_{n+3},2\nu_{n+6}\}=\{0\}$, since $\{2\nu_n,2\nu_{n+3},\nu_{n+6}\}\supset\{2\nu_n,\nu_{n+3},2\nu_{n+6}\}$. 
Hence $(\nu_n,2\nu_{n+3},\nu_{n+6},2\nu_{n+9})$ is admissible. 
This completes the proof of Lemma 3.2.1. 
\end{proof}

\begin{proof}[Proof of Lemma 3.2.2]
Let $n\ge 15$. 
(3.2.3) and (3.2.4) follow from (3.2.1) and (3.2.2) respectively. 

As before we use the same letter for a map and its homotopy class. 
To prove (3.2.1), note that $
(2\nu_{n+3},A_2,\nu_{n+6})\circ(-E(2\nu_{n+9}))\simeq i_{2\nu_{n+3}}\circ[2\nu_{n+3},A_2,\nu_{n+6}]\circ(\nu_{n+6},A_1,2\nu_{n+9})\simeq *
$ and  
\begin{align*}
\eta_{n-1}\circ\{[\nu_n,A_3,2\nu_{n+3}],&(2\nu_{n+3},A_2,\nu_{n+6}),-E(2\nu_{n+9})\}\\
&\subset\{\eta_{n-1}\circ [\nu_n,A_3,2\nu_{n+3}],(2\nu_{n+3},A_2,\nu_{n+6}),-E(2\nu_{n+9})\},
\end{align*}
where $\eta_{n-1}\circ[\nu_n,A_3,2\nu_{n+3}]\in\{\eta_{n-1},\nu_n,2\nu_{n+3}\}\circ q_{2\nu_{n+3}}$ by \cite[Proposition 1.9]{T}. 
We have 
$$
\mathrm{Indet}\{\eta_{n-1},\nu_n,2\nu_{n+3}\}=\mathrm{Indet}\{\eta_{n-1},2\nu_n,\nu_{n+3}\}
=\eta_{n-1}\circ\pi_{n+7}^n=\bZ_2\{\eta_{n-1}\circ\sigma_n\}.
$$
Since $\{\eta_{n-1},2\nu_n,\nu_{n+3}\}\ni\varepsilon_{n-1}$ by \cite[(6.1)]{T} and 
$\{\eta_{n-1},\nu_n,2\nu_{n+3}\}\subset\{\eta_{n-1},2\nu_n,\nu_{n+3}\}$, we have 
$\{\eta_{n-1},\nu_n,2\nu_{n+3}\}=\{\eta_{n-1},2\nu_n,\nu_{n+3}\}=\{\varepsilon_{n-1},\varepsilon_{n-1}+\eta_{n-1}\circ\sigma_n\}$. 
Hence $\eta_{n-1}\circ[\nu_n,A_3,2\nu_{n+3}]$ is $\varepsilon_{n-1}\circ q_{2\nu_{n+3}}$ or 
$(\varepsilon_{n-1}+\eta_{n-1}\circ\sigma_n)\circ q_{2\nu_{n+3}}$. 
We have 
\begin{align*}
\{\varepsilon_{n-1}&\circ q_{2\nu_{n+3}},(2\nu_{n+3},A_2,\nu_{n+6}),-E(2\nu_{n+9})\}\\
&\subset\{\varepsilon_{n-1},-E\nu_{n+6},-E(2\nu_{n+9})\}=\{\varepsilon_{n-1},\nu_{n+7},2\nu_{n+10}\}\\
&\subset\{\varepsilon_{n-1},2\nu_{n+7},\nu_{n+10}\}\\
&\supset\{\varepsilon_{n-1},2\iota_{n+7},\nu_{n+7}^2\}
\supset(-1)^{n-4}E^{n-4}\{\varepsilon_3,2\iota_{11},\nu_{11}^2\}
\supset(-1)^nE^{n-4}\{\varepsilon_3,2\iota_5,\nu_5^2\}_6\\
&\ni E^{n-4}\bar{\varepsilon}_3=\bar{\varepsilon}_{n-1}=\eta_{n-1}\circ\kappa_n\quad (\text{by \cite[(10.23)]{T}}).
\end{align*}
Since $\pi_{n+11}(\s^{n-1})=0$, by indeterminacies, we have 
$$
\{\varepsilon_{n-1},\nu_{n+7},2\nu_{n+10}\}=\{\varepsilon_{n-1},2\nu_{n+7},\nu_{n+10}\}.
$$
Since $\mathrm{Indet}\{\varepsilon_{n-1},2\nu_{n+7},\nu_{n+10}\}=\varepsilon_{n-1}\circ\pi_{n+14}^{n+7}=0$ by \cite[Lemma 10.7]{T}, 
we have
$$
\{\varepsilon_{n-1},\nu_{n+7},2\nu_{n+10}\}=\{\varepsilon_{n-1},2\nu_{n+7},\nu_{n+10}\}=\bar{\varepsilon}_{n-1}=\eta_{n-1}\circ\kappa_n
$$
so that $\{\varepsilon_{n-1}\circ q_{2\nu_{n+3}},(2\nu_{n+3},A_2,\nu_{n+6}),-E(2\nu_{n+9})\}=\eta_{n-1}\circ\kappa_n$. 
On the other hand, we have
\begin{align*}
&\{(\varepsilon_{n-1}+\eta_{n-1}\circ\sigma_n)\circ q_{2\nu_{n+3}},(2\nu_{n+3},A_2,\nu_{n+6}),-E(2\nu_{n+9})\}\\
&\subset\{\varepsilon_{n-1}+\eta_{n-1}\circ\sigma_n,-E(\nu_{n+6}),-E(2\nu_{n+9})\}=
\{\varepsilon_{n-1}+\eta_{n-1}\circ\sigma_n,E\nu_{n+6},E(2\nu_{n+9})\}\\
&\subset\{\varepsilon_{n-1},\nu_{n+7},2\nu_{n+10}\}+\{\eta_{n-1}\circ\sigma_n,\nu_{n+7},2\nu_{n+10}\}.
\end{align*}
To complete the proof of (3.2.1), it suffices to prove $\{\eta_{n-1}\circ\sigma_n,\nu_{n+7},2\nu_{n+10}\}=\{0\}$. 
Since $\eta_{n-1}\circ\sigma_n=\sigma_{n-1}\circ\eta_{n+6}$ by \cite[Proposition 3.1]{T}, we have
\begin{align*}
\{\eta_{n-1}\circ\sigma_n,\nu_{n+7},2\nu_{n+10}\}&=\{\sigma_{n-1}\circ\eta_{n+6},\nu_{n+7},2\nu_{n+10}\}\\
&\supset\sigma_{n-1}\circ\{\eta_{n+6},\nu_{n+7},2\nu_{n+10}\}\\
&\ni \sigma_{n-1}\circ\varepsilon_{n+6}\quad(\text{by \cite[(6.1)]{T}})\\
&=0\quad (\text{by \cite[Lemma 10.7]{T}}).
\end{align*}
Hence 
\begin{align*}
\{\eta_{n-1}\circ\sigma_n,\nu_{n+7},2\nu_{n+10}\}
&=\mathrm{Indet}\{\eta_{n-1}\circ\sigma_n,\nu_{n+7},2\nu_{n+10}\}\\
&=\pi_{n+11}(\s^{n-1})\circ 2\nu_{n+11}+\eta_{n-1}\circ\sigma_n\circ\pi_{n+14}(\s^{n+7})\\
&=0\quad (\text{by \cite[Theorem 7.6, Lemma 10.7]{T}}).
\end{align*}
That is, $\{\eta_{n-1}\circ\sigma_n,\nu_{n+7},2\nu_{n+10}\}=\{0\}$ as desired. 

Next we prove (3.2.2). 
By Lemma 6.4, Theorem 10.3, Lemma 10.7, and Theorem 10.10 of \cite{T}, we have
\begin{gather*}
\pi_{n+14}^n=\begin{cases} \bZ_4\{\sigma_{15}^2\}\oplus\bZ_2\{\kappa_{15}\} & n=15\\
(\bZ_2)^2\{\sigma_n^2, \kappa_n\} & n\ge 16 \end{cases},\\
\pi_{n+14}^{n-1}=\begin{cases} \bZ_{16}\{\rho_{n-1}\}\oplus\bZ_2\{\bar{\varepsilon}_{n-1}\} & n=15, 16, n\ge 18\\
\bZ_{32}\{\rho_{16}\}\oplus\bZ_2\{\bar{\varepsilon}_{16}\}\oplus\bZ\{\Delta\iota_{33}\} & n=17\end{cases},\\
\eta_{n-1}\circ\sigma_n^2=0\ (n\ge 10),\quad \eta_{n-1}\circ\kappa_n=\bar{\varepsilon}_{n-1}\ (n\ge 7).
\end{gather*}
Then by (3.2.1) we have $\{[\nu_n,A_3,2\nu_{n+3}],(2\nu_{n+3},A_2,\nu_{n+6}),-E(2\nu_{n+9})\}\equiv \kappa_n\ \mathrm{mod}\ \sigma_n^2$. 
This proves (3.2.2) and completes the proof of Lemma 3.2.2.
\end{proof}

\begin{proof}[Proof of Lemma 3.2.3]
We have 
\begin{align*}
\mathrm{Indet}\{\sigma_{12},\nu_{18},2\nu_{21}\}_1&=\pi^{12}_{23}\circ 2\nu_{23}+\sigma_{12}\circ E\pi^{18}_{25}\\
&=\pi^{12}_{23}\circ 2\nu_{23}+\sigma_{12}\circ \pi^{19}_{26}=\mathrm{Indet}\{\sigma_{12},\nu_{19},2\nu_{22}\}\\
&=\big(\bZ\{\Delta\iota_{25}\}\oplus\bZ_8\{\zeta_{12}\}\big)\circ 2\nu_{23}+\sigma_{12}\circ\bZ_{16}\{\sigma_{19}\}\\
&\hspace{2cm}(\text{by \cite[Theorem 7.4, Proposition 5.15]{T}}).
\end{align*}
Since $\Delta\iota_{25}\circ\nu_{23}=\Delta\nu_{23}$ 
by \cite[Proposition 2.5]{T}, we have $\bZ\{\Delta\iota_{25}\}\circ 2\nu_{23}=\bZ_2\{2\Delta\nu_{25}\}$ by (3.2.5). 
We have 
$\zeta_9\circ\nu_{20}=-\nu_9\circ\zeta_{12}$ by \cite[Theorem 2.4]{BH} (cf.\cite[Proposition 3.1]{T}) 
so that 
$\zeta_{12}\circ 2\nu_{23}=-2\nu_{12}\circ\zeta_{15}=0$ since $2\nu_5\circ\zeta_8=0$ by \cite[Theorem 10.3]{T}. 
Therefore 
$\mathrm{Indet}\{\sigma_{12},\nu_{18},2\nu_{21}\}_1=\bZ_2\{2\Delta\nu_{25}\}\oplus\bZ_{16}\{\sigma_{12}^2\}$ by (3.10). 
Hence we can take an element $\alpha\in\{\sigma_{12},\nu_{18},2\nu_{21}\}_1$ such that $\alpha=x\kappa_{12}+y\Delta\nu_{25}\ (x,y\in\{0,1\})$. 
On the other hand
\begin{align*}
H\{\sigma_{12},&\nu_{18},2\nu_{21}\}_1\\
&=\Delta^{-1}(\sigma_{11}\circ\nu_{18})\circ 2\nu_{23}\quad(\text{by \cite[Proposition 2.6]{T} cf. near the end of \S3.1})\\
&=\{2\nu_{23},-2\nu_{23}\}\quad(\text{since $E(\sigma_{11}\circ\nu_{18})=0$ by \cite[(7.20)]{T} and so $\Delta\iota_{23}=\sigma_{11}\circ\nu_{18}$}),\\
H(\alpha)&=y H(\Delta\nu_{25})\\
&=y H(\pm[\iota_{12},\iota_{12}]\circ\nu_{23})\quad(\text{by \cite[Proposition 2.5]{T}})\\
&=\pm 2y\nu_{23} \quad(\text{by \cite[Proposition 2.2]{T}}).
\end{align*}
Hence $y=1$ 
and so we have
$$
\{\sigma_{12},\nu_{19},2\nu_{22}\}\circ\eta_{26}\ni\alpha\circ\eta_{26}=x\kappa_{12}\circ\eta_{26}+(\Delta\nu_{25})\circ\eta_{26}=x\kappa_{12}\circ\eta_{26}
$$
by \cite[Proposition 2.5]{T}. 
On the other hand 
$$
\{\sigma_{12},\nu_{19},2\nu_{22}\}\circ\eta_{26}=-(\sigma_{12}\circ\{\nu_{19},2\nu_{22},\eta_{25}\})\subset\sigma_{12}\circ\pi_{27}^{19}=\{0\}
$$
by \cite[Proposition 1.4, Lemma 10.7, (10.18)]{T}. 
Hence $x\kappa_{12}\circ\eta_{26}=0$. 
Since $\kappa_{12}\circ\eta_{26}=\bar{\varepsilon}_{12}\ne 0$ by \cite[Theorem 10.5, (10.23)]{T}, we have  $x=0$. 
This completes the proof of Lemma 3.2.3.
\end{proof}

\begin{appendix}
\section{An easy proof of \cite[Theorem 4.9]{OO1}}
As an introduction to the definition of \^{O}guchi's $4$-fold bracket, we proved Theorem 4.9 of \cite{OO1} by a slightly difficult method. 
We should note that the main part of the theorem itself is an easy consequence of elementary properties of Toda brackets. 
We will explain it. 
First we prove 

\begin{lemma}
Let the following homotopy commutative diagrams be given.  
$$
\begin{CD}
X_4@<f_3<<E^nX_3@. ,\quad X_3@<f_2<<X_2@<f_1<<X_1@.,\quad f_3\circ  E^nf_2\simeq *,\ f_2\circ f_1\simeq *\\
@Va_4VV @V E^na_3VV \quad @VVa_3V @VVa_2V @VVa_1V @.@.\\
X_4'@<<f_3'<  E^nX_3'@. ,\quad X_3'@<<f_2'< X_2'@<<f_1'<X_1'@.,\quad f_3'\circ  E^nf_2'\simeq *,\ f_2'\circ f_1'\simeq *
\end{CD}
$$
\begin{enumerate}
\item([cf.Theorem (4.3) of \cite{S}])  We have $a_4\circ\{f_3,f_2,f_1\}_n=\{f_3',f_2',f_1'\}_n\circ E^{n+1}a_1$ in 
$[E^{n+1} X_2,X_4']\circ E^{n+1} f_1\backslash[E^{n+1} X_1,X_4']/f_3'\circ E^n[E X_1, X_3']$. 
\item If all $a_i$ are homotopy equivalences, then 
$a_4\circ\{f_3,f_2,f_1\}_n=\{f_3',f_2',f_1'\}_n\circ E^{n+1}a_1$ in $[E^{n+1}X_1,X_4']$. 
\item If $X_k=X_k'$ and $a_k=1_{X_k}$ for $k=1,4$, then $\{f_3,f_2,f_1\}_n$ and $\{f_3',f_2',f_1'\}_n$ have a common element. 
\end{enumerate}
\end{lemma}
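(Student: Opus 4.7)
The plan is to prove (1) directly at the level of representatives, and then to deduce (2) and (3) from (1) by elementary bookkeeping with the indeterminacy.

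For (1), I pick null homotopies $A_1\colon f_2\circ f_1\simeq *$ and $A_2\colon f_3\circ E^nf_2\simeq *$, giving a typical representative $\alpha=[f_3,A_2,E^nf_2]\circ(E^nf_2,\widetilde{E}^nA_1,E^nf_1)$ of $\{f_3,f_2,f_1\}_n$. Write $H_2,H_3,H_4$ for the homotopies filling the three squares, i.e., $H_2\colon a_2\circ f_1\simeq f_1'\circ a_1$, $H_3\colon a_3\circ f_2\simeq f_2'\circ a_2$, $H_4\colon a_4\circ f_3\simeq f_3'\circ E^na_3$. I splice these with $A_1, A_2$ by means of the operations $\bullet,\bar\circ$ of \cite{OO1} to obtain null homotopies $A_1'\colon f_2'\circ f_1'\simeq *$ and $A_2'\colon f_3'\circ E^nf_2'\simeq *$, and form $\alpha'=[f_3',A_2',E^nf_2']\circ(E^nf_2',\widetilde{E}^nA_1',E^nf_1')\in\{f_3',f_2',f_1'\}_n$. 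Repeated application of \cite[Lemma 2.10]{OO1} then shows that $a_4\circ\alpha-\alpha'\circ E^{n+1}a_1$ splits as $u+v$, where $u\in[E^{n+1}X_2,X_4']\circ E^{n+1}f_1$ collects the correction terms involving $H_2,H_3$ on the coextension factor and $v\in f_3'\circ E^n[EX_1,X_3']$ collects those involving $H_3,H_4$ on the extension factor. This is precisely the asserted identity in the double-coset set.

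For (2), when every $a_i$ is a homotopy equivalence the maps $a_2,a_3$ induce bijections intertwining $[E^{n+1}X_2,X_4']\circ E^{n+1}f_1$ with $[E^{n+1}X_2',X_4']\circ E^{n+1}f_1'$ and $a_4\circ f_3\circ E^n[EX_1,X_3]$ with $f_3'\circ E^n[EX_1,X_3']$, so after left-composing with $a_4$ and right-precomposing with $E^{n+1}a_1$ the two indeterminacies coincide and the double coset in (1) collapses to a single element. For (3), specialise (1) to $a_1=1_{X_1}$, $a_4=1_{X_4}$ and fix any $\alpha\in\{f_3,f_2,f_1\}_n$ and $\beta\in\{f_3',f_2',f_1'\}_n$; (1) furnishes $u\in[E^{n+1}X_2,X_4]\circ E^{n+1}f_1$ and $v\in f_3'\circ E^n[EX_1,X_3']$ with $\alpha=u+\beta+v$. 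Since $u\in\mathrm{Indet}\{f_3,f_2,f_1\}_n$ we have $\alpha-u\in\{f_3,f_2,f_1\}_n$, while $\alpha-u=\beta+v\in\{f_3',f_2',f_1'\}_n$ because $v\in\mathrm{Indet}\{f_3',f_2',f_1'\}_n$; so $\alpha-u$ is a common element.

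The main obstacle is the splicing step in (1): the null homotopies $A_1', A_2'$ must be assembled from $A_1, A_2$ and $H_2, H_3, H_4$ in a compatible way, and one then has to unfold the maps $[f_3',A_2',E^nf_2']\circ(E^nf_2',\widetilde{E}^nA_1',E^nf_1')$ and $a_4\circ[f_3,A_2,E^nf_2]\circ(E^nf_2,\widetilde{E}^nA_1,E^nf_1)\circ E^{n+1}a_1$ on the pertinent mapping-cone coordinates to check that the discrepancy lands in the required double coset. This is a routine but lengthy chase; once it is made, nothing more than \cite[Lemma 2.10]{OO1} is needed for either piece.
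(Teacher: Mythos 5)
Your overall strategy is sound and parts (2) and (3) end up essentially where the paper does, but your treatment of (1) takes a genuinely different and much heavier route. The paper never touches representatives or null homotopies: it simply applies Toda's Proposition 1.2 four times to get the chain $a_4\circ\{f_3,f_2,f_1\}_n\subset\{a_4\circ f_3,f_2,f_1\}_n=\{f_3'\circ E^na_3,f_2,f_1\}_n\subset\{f_3',a_3\circ f_2,f_1\}_n=\{f_3',f_2'\circ a_2,f_1\}_n\supset\{f_3',f_2',f_1'\circ a_1\}_n\supset\{f_3',f_2',f_1'\}_n\circ E^{n+1}a_1$, so that both sides are nonempty subsets of the single intermediate bracket $\{f_3',f_2'\circ a_2,f_1\}_n$, whose indeterminacy is exactly $[E^{n+1}X_2,X_4']\circ E^{n+1}f_1+f_3'\circ E^n[EX_1,X_3']$; the double-coset identity is then immediate. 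Your plan of splicing $A_1,A_2$ with the square-filling homotopies via $\bullet$ and $\bar\circ$ and chasing the discrepancy on mapping-cone coordinates would, if executed, re-derive precisely the content of Toda's Proposition 1.2, so the "routine but lengthy chase" you defer is the entire proof; as written it is a plan rather than an argument, and you should either carry it out or replace it by the citation. You also implicitly use (but do not state) that $a_4\circ\{f_3,f_2,f_1\}_n$ and $\{f_3',f_2',f_1'\}_n\circ E^{n+1}a_1$ each lie in a single double coset, which follows from additivity of pre- and post-composition with suspension maps and is needed to pass from your one-representative computation to the asserted set equality. For (2), your argument that the two transported indeterminacy subgroups coincide when all $a_i$ are equivalences is correct and is an acceptable alternative to the paper's sandwich argument upgrading each of the four containments to an equality. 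Your (3) is the same coset manipulation as the paper's, just phrased as $\alpha-u=\beta+v$ instead of $\alpha'=\beta+\alpha+\gamma$.
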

\begin{proof}
(1) It follows from \cite[Proposition 1.2]{T} that we have
\begin{align*}
a_4\circ\{f_3,f_2,f_1\}_n&\subset\{a_4\circ f_3,f_2,f_1\}_n=\{f_3'\circ  E^na_3,f_2,f_1\}_n\\
&\subset\{f_3',a_3\circ f_2,f_1\}_n=\{f_3',f_2'\circ a_2,f_1\}_n\\
&\supset\{f_3',f_2',a_2\circ f_1\}_n=\{f_3',f_2',f_1'\circ a_1\}_n\\
&\supset\{f_3',f_2',f_1'\}_n\circ  E^{n+1}a_1.
\end{align*}
Hence $a_4\circ\{f_3,f_2,f_1\}_n\subset\{f_3',a_3\circ f_2,f_1\}_n
\supset\{f_3',f_2',f_1'\}_n\circ  E^{n+1}a_1$. 
This proves (1). 

(2) Suppose that $a_i\ (1\le i\le 4)$ are homotopy equivalences. 
We are going to show that four containments in the proof of (1) are equalities $=$. 
We have $a_4\circ\{f_3,f_2,f_1\}_n\subset \{a_4\circ f_3,f_2,f_1\}_n$ and so $\{\vec{\bm f}\}_n\subset a_4^{-1}\circ\{a_4\circ f_3,f_2,f_1\}_n\subset\{\vec{\bm f}\}_n$. 
Hence $\{\vec{\bm f}\}_n=a_4^{-1}\circ\{a_4\circ f_3,f_2,f_1\}_n$ and 
$a_4\circ\{\vec{\bm f}\}_n=\{a_4\circ f_3,f_2,f_1\}_n$. 
We have $\{f'_3,a_3\circ f_2,f_1\}_n\supset\{f'_3\circ E^n a_3, f_2,f_1\}_n=
\{f'_3\circ E^na_3,a_3^{-1}\circ a_3\circ f_2,f_1\}_n\supset\{f'_3,a_3\circ f_2,f_1\}_n$ and so $\{f'_3\circ E^n a_3, f_2, f_1\}_n=\{f'_3,a_3\circ f_2,f_1\}_n$. 
We have $\{f'_3,f'_2\circ a_2,f_1\}_n\supset\{f'_3,f'_2,a_2\circ f_1\}_n=\{f'_3,f'_2\circ a_2\circ a_2^{-1},a_2\circ f_1\}_n\supset\{f'_3,f'_2\circ a_2,f_1\}_n$ and so $\{f'_3,f'_2\circ a_2,f_1\}_n=\{f'_3,f'_2,a_2\circ f_1\}_n$. 
We have $\{f'_3,f'_2,f'_1\circ a_1\}_n\supset\{f'_3,f'_2,f'_1\}_n\circ E^{n+1}a_1=\{f'_3,f'_2,f'_1\circ a_1\circ a_1^{-1}\}_n\circ E^{n+1}a_1\supset 
\{f'_3,f'_2,f'_1\circ a_1\}_n$ and so $\{f'_3,f'_2,f'_1\circ a_1\}_n=\{\vec{\bm f'}\}_n\circ E^{n+1}a_1$. 
This proves (2). 

(3) Suppose that $X_i=X_i'$ and $a_i=1_{X_i}$ for $i=1,4$. 
Take $\alpha\in\{f_3,f_2,f_1\}_n$ and $\alpha'\in\{f_3',f_2',f_1'\}_n$ arbitrarily. 
Then we have 
\begin{gather*}
\{f_3,f_2,f_1\}_n=[E^{n+1}X_2,X_4]\circ E^{n+1}f_1+\alpha+f_3\circ E^n[EX_1,X_3],\\
\{f_3',f_2',f_1'\}_n=[E^{n+1}X_2',X_4]\circ E^{n+1}f_1'+\alpha'+f_3'\circ E^n[EX_1,X_3'].
\end{gather*}
Since $a_2\circ f_1\simeq f_1'$ and $f_3\simeq f_3'\circ E^n a_3$, we have $[E^{n+1}X_2,X_4]\circ E^{n+1}f_1\supset[E^{n+1}X_2',X_4]\circ E^{n+1}f_1'$ and $f_3\circ E^n[EX_1,X_3]\subset f_3'\circ E^n[EX_1,X_3']$. 
It follows from (1) that we have  
\begin{align*}
&[E^{n+1}X_2,X_4]\circ E^{n+1}f_1+\alpha+f_3'\circ E^n[EX_1,X_3']\\
&=[E^{n+1}X_2,X_4]\circ E^{n+1}f_1+\alpha'+f_3'\circ E^n[EX_1,X_3']
\end{align*}
so that $\alpha'=\beta+\alpha+\gamma$ for some $\beta\in [E^{n+1}X_2,X_4]\circ E^{n+1}f_1$ and $\gamma\in f_3'\circ E^n[EX_1,X_3']$. 
Then $\beta+\alpha$ is contained in both $\{f_3,f_2,f_1\}_n$ and $\{f_3',f_2',f_1'\}_n$. 
This proves (3). 
\end{proof}

Now we rewrite the main part of Theorem 4.9 of \cite{OO1}. 
Suppose that the following data are given: five spaces $X_k\ (1\le k\le 5)$, four non-negative integers $m_k\ (1\le k\le 4)$ (usually we take $m_1=m_2=0$ because it does not injure the generality as shall be shown on the end of this appendix), four maps  $f_k:E^{m_k}X_k\to X_{k+1}\ (1\le k\le 4)$, and three null homotopies $A_k:f_{k+1}\circ E^{m_{k+1}}f_k\simeq *\ (1\le k\le 3)$ such that 
$$
[f_{k+2},A_{k+1},E^{m_{k+2}}f_{k+1}]\circ(E^{m_{k+2}}f_{k+1},\widetilde{E}^{m_{k+2}}A_k,E^{m_{k+2}}E^{m_{k+1}}f_k)\simeq *\ (k=1,2).
$$
Then the main part of Theorem 4.9 of \cite{OO1} is

\begin{prop}
The Toda brackets $\{f_4,[f_3,A_2,E^{m_3}f_2],(E^{m_3}f_2,\widetilde{E}^{m_3}A_1,E^{m_{[3,2]}}f_1)\}_{m_4}$ and $\{[f_4,A_3,E^{m_4}f_3]\circ(\psi^{m_4}_{f_3})^{-1},(f_3,A_2,E^{m_3}f_2),-EE^{m_{[3,2]}}f_1\}_{m_4}$ 
have a common element. 
\end{prop}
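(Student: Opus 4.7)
The plan is to exhibit a single element $\delta\in[E^{m_4+1}EE^{m_{[3,1]}}X_1,X_5]$ that lies in both Toda brackets. Write $g_2=[f_3,A_2,E^{m_3}f_2]$ and $g_1=(E^{m_3}f_2,\widetilde{E}^{m_3}A_1,E^{m_{[3,2]}}f_1)$. The given admissibility furnishes a null-homotopy $\widehat{H}_0$ of $g_2\circ g_1$ and a null-homotopy $\widehat{H}_1$ of $[f_4,A_3,E^{m_4}f_3]\circ(E^{m_4}f_3,\widetilde{E}^{m_4}A_2,E^{m_{[4,3]}}f_2)$. Using the mapping-cone decomposition $E^{m_4}(E^{m_3}X_3\cup_{E^{m_3}f_2}CE^{m_{[3,2]}}X_2)\approx E^{m_{[4,3]}}X_3\cup_{E^{m_{[4,3]}}f_2}CE^{m_{[4,2]}}X_2$ supplied by $\psi^{m_4}_{E^{m_3}f_2}$, I would glue $A_3$ on the base-suspension to $f_4\circ\widetilde{E}^{m_4}A_2$ on the cone part to produce a null-homotopy $\widetilde{A}_3\colon f_4\circ E^{m_4}g_2\simeq *$. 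Setting
\[
\delta:=[f_4,\widetilde{A}_3,E^{m_4}g_2]\circ\bigl(E^{m_4}g_2,\widetilde{E}^{m_4}\widehat{H}_0,E^{m_4}g_1\bigr)
\]
places $\delta$ in the first Toda bracket by definition.

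The second step is to recognise $\delta$ as an element of the second bracket. The key observation is that the split of $\widetilde{A}_3$ into its base piece $A_3$ and cone piece $f_4\circ\widetilde{E}^{m_4}A_2$ factors the outer extension as $[f_4,\widetilde{A}_3,E^{m_4}g_2]=[f_4,A_3,E^{m_4}f_3]\circ(\psi^{m_4}_{f_3})^{-1}\circ\iota$, where $\iota$ is the natural map built from $E^{m_4}$ of the coextension structure underlying $g_2$. A dual rearrangement of the coextension factor, using $A_1$ together with the identity $(g,B,h)\circ(-Ef)\simeq i_g\circ[g,B,h]\circ(h,\cdot,f)$ from \cite[(5.11)]{Og} (cf.\ \cite[Lemma 3.6]{OO1}), converts the remainder of $\delta$ into a composite involving the coextension $(f_3,A_2,E^{m_3}f_2)$ and $-EE^{m_{[3,2]}}f_1$. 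Reassembling yields a representative of the form demanded by the second bracket, so $\delta$ lies there too.

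The main obstacle is the meticulous bookkeeping: the commutation of $\psi^{m_4}_{E^{m_3}f_2}$ with $\psi^{m_4}_{f_3}$, the compatibility of $\widetilde{E}^{m_4}$ with $[\,\cdot\,,\,\cdot\,,\,\cdot\,]$ and $(\,\cdot\,,\,\cdot\,,\,\cdot\,)$ recorded in \cite[Lemma 2.4]{OO1}, and the sign produced by the coextension of $-EE^{m_{[3,2]}}f_1$ must all be tracked through the identification. The argument uses no essentially new ingredient beyond these routine manipulations and the defining formulas for the extension and coextension constructions; once the identifications are in hand the two representations of $\delta$ match term by term.
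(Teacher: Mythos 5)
Your plan is a direct construction of a common element $\delta$ by gluing null homotopies, and as written it has two genuine gaps. First, the null homotopy $\widetilde{A}_3$ of $f_4\circ E^{m_4}[f_3,A_2,E^{m_3}f_2]$ cannot be obtained by simply gluing $A_3$ on the base to $f_4\circ\widetilde{E}^{m_4}A_2$ on the cone part: a null homotopy of a map defined on a mapping cone is not determined by independent data on the base and on the cone, and the two pieces you name need not match where they must. The obstruction to extending $A_3$ to a null homotopy of the whole extension is measured exactly by the class of $[f_4,A_3,E^{m_4}f_3]\circ(E^{m_4}f_3,\widetilde{E}^{m_4}A_2,E^{m_{[4,3]}}f_2)$, so the null homotopy $\widehat{H}_1$ you introduce must actually enter the construction of $\widetilde{A}_3$; you mention it and then never use it. Second, and more seriously, the factorization $[f_4,\widetilde{A}_3,E^{m_4}g_2]=[f_4,A_3,E^{m_4}f_3]\circ(\psi^{m_4}_{f_3})^{-1}\circ\iota$ is asserted without proof: the two extensions live on different mapping cones, and whether the left-hand side factors through the collapse depends on precisely how the cone part of $\widetilde{A}_3$ was chosen. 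This, together with the unproved ``dual rearrangement'' on the coextension side, is the entire content of the statement, not bookkeeping.

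The paper takes a different and much lighter route, which is worth internalizing. It first proves a general ladder lemma (Lemma A.1): if two null triples are joined by a homotopy commutative ladder whose end vertical maps are identities, then the two Toda brackets have a common element. The proof of that lemma is purely algebraic --- by Toda's naturality \cite[Proposition 1.2]{T} the two brackets lie in a common double coset, and since each bracket is a coset of its own indeterminacy one can write $\alpha'=\beta+\alpha+\gamma$ and exhibit $\beta+\alpha$ in both brackets. Proposition A.2 then follows by applying this to the ladder with vertical maps $1_{X_5}$, $E^{m_4}i_{f_3}$, $q_{E^{m_3}f_2}$ and $1$, the only input being the single homotopy
$$(f_3,A_2,E^{m_3}f_2)\circ(-EE^{m_{[3,2]}}f_1)\simeq i_{f_3}\circ[f_3,A_2,E^{m_3}f_2]\circ(E^{m_3}f_2,\widetilde{E}^{m_3}A_1,E^{m_{[3,2]}}f_1)\simeq *.$$
No explicit common element is ever constructed. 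Your approach is essentially the ``slightly difficult method'' that this appendix was written to replace; if you want to pursue it, the two factorizations above have to be proved in detail, whereas the ladder argument avoids them entirely.
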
 
\begin{proof}
Consider the following homotopy commutative diagrams.  
$$
\begin{CD}
X_5 @<f_4<<E^{m_4}X_4\\
@| @VV E^{m_4}i_{f_3}V\\
X_5 @<[f_4,A_3,E^{m_4}f_3]\circ(\psi^{m_4}_{f_3})^{-1}<<E^{m_4}(X_4\cup_{f_3}CE^{m_3}X_3)
\end{CD}
$$
$$
\tiny{
\begin{CD}
X_4@<[f_3,A_2,E^{m_3}f_2]<<E^{m_3}X_3\cup_{E^{m_3}f_2}CE^{m_{[3,2]}}X_2@<(E^{m_3}f_2,\widetilde{E}^{m_3}A_1,E^{m_{[3,2]}}f_1)<<EE^{m_{[3,1]}}X_1\\
@VV i_{f_3}V @VV q_{E^{m_3}f_2}V @|\\
X_4\cup_{f_3}CE^{m_3}X_3 @<(f_3,A_2,E^{m_3}f_2)<<EE^{m_{[3,2]}}X_2
@<-EE^{m_{[3,2]}}f_1<<EE^{m_{[3,1]}}X_1
\end{CD}
}
$$
Since 
$$
(f_3,A_2,E^{m_3}f_2)\circ(-EE^{m_{[3,2]}}f_1)
\simeq i_{f_3}\circ[f_3,A_2,E^{m_3}f_2]\circ(E^{m_3}f_2,\widetilde{E}^{m_3}A_1,E^{m_{[3,2]}}f_1)\simeq *,
$$
we can apply Lemma A.1(3) to the above diagrams and we obtain the assertion. 
\end{proof}

Given $\vec{\bm \alpha}=(\alpha_4,\alpha_3,\alpha_2,\alpha_1)$ with $\alpha_k\in[E^{m_k}X_k,X_{k+1}]\ (1\le k\le 4)$, let $\vec{\bm f}=(f_4,f_3,f_2,f_1)$ be a representative of $\vec{\bm \alpha}$. 
Set 
$$
X^*_k=\begin{cases} E^{m_{[2,1]}}X_1 & k=1\\
E^{m_2}X_2 & k=2\\
X_k & 3\le k\le 5\end{cases},\quad
m^*_k=\begin{cases} 0 & k=1,2\\
m_k & k=3,4\end{cases},\quad f^*_k=\begin{cases} E^{m_2}f_1 & k=1\\
f_k & 2\le k\le 4\end{cases}.
$$
Suppose $\vec{\bm \alpha}$ is admissible. 
Let $\vec{\bm A}=(A_3,A_2,A_1)$ be an admissible triple for $\vec{\bm f}$. 
Then $\vec{\bm A}$ is also an admissible triple for $\vec{\bm f^*}$. 
It follows that $\{\vec{\bm f}\,\}^{(2)}_{\vec{\bm m}}=\{\vec{\bm f^*}\}^{(2)}_{\vec{\bm m^*}}$. 
Hence we can denote $\{\vec{\bm f}\,\}^{(2)}_{\vec{\bm m}}$ by 
$\{\vec{\bm \alpha}\}^{(2)}_{\vec{\bm m}}$ and $\{\alpha_4,\alpha_3,\alpha_2,E^{m_2}\alpha_1\}^{(2)}_{(m_4,m_3)}$. 

\end{appendix}

\end{document}